\newtheorem{theorem}{Theorem}[section]
\newtheorem{corollary}[theorem]{Corollary}
\newtheorem{example}{Example}
\newtheorem{lemma}[theorem]{Lemma}
\newtheorem{proposition}[theorem]{Proposition}
\newtheorem{remark}[theorem]{Remark}
\newtheorem{step}{Step}
\newtheorem{assumption}{Assumption}
\newcommand{\eps}{\varepsilon}
\newcommand{\R}{I\!\!R}
\newcommand{\E}{\mathbf{E}}
\newcommand{\Q}{\mathbb{Q}}
\newcommand{\Lu}{\mathcal{L}}
\newcommand{\W}{\mathcal{W}}
\newcommand{\tr}{\text{trace}}\newcommand{\supp}{\text{supp}}
\newcommand{\Mi}{{\mathbf M}}
\begin{document}
\title{Convergence in Multiscale Financial Models with Non-Gaussian Stochastic Volatility.\footnotemark[2]
 }
\author{Martino Bardi\footnotemark[1], Annalisa Cesaroni\footnotemark[1], 
Andrea Scotti\footnotemark[1] }
  \date{}
 
\maketitle
\renewcommand{\thefootnote}{\fnsymbol{footnote}}
\footnotetext[1]{Dipartimento di Matematica, Universit\`a
di Padova, via Trieste 63, 35121 Padova, Italy ({\tt
bardi@math.unipd.it, 
 acesar@math.unipd.it, andrea.scotti@studenti.unipd.it}).}
\footnotetext[2]{Partially supported by 
the Fondazione CaRiPaRo Project "Nonlinear Partial Differential Equations: models, analysis, and control-theoretic problems"  and the European Project Marie Curie ITN "SADCO - Sensitivity Analysis for Deterministic Controller Design".}
\renewcommand{\thefootnote}{\arabic{footnote}}
 
\begin{abstract}
We consider stochastic control systems affected by a fast mean reverting volatility $Y(t)$ driven by a pure jump L\'evy process.
%Financial models with pure jump volatility have been proposed in \cite{BNS1}.
Motivated by a large literature on financial models, we assume that $Y(t)$ evolves at a faster time scale $t/\epsilon$ than the assets, and we study the asymptotics as $\epsilon\to 0$.
This is a singular perturbation problem that we study mostly by PDE methods within the theory of viscosity solutions.

\smallskip

{\bf Keywords}:  Singular perturbations, stochastic volatility, jump processes, viscosity solutions, Hamilton-Jacobi-Bellman equations,
portfolio optimization.

\end{abstract}
\maketitle

\section{Introduction}
 We consider the  controlled stochastic differential system in $\R^{n+1}$
\begin{equation}\label{sdeintro} \begin{cases} 
dX(s)=f(X(s), Y_\eps(s^-), u(s)) ds+\sigma(X(s),Y_\eps(s^-), u(s))dW(s)   \\ 
 dY_\eps(s)=   -\frac{1}{\varepsilon }Y_\eps(s^-)ds+dZ\left(\frac{1}{\varepsilon }s\right)      \\  \end{cases}
\end{equation}   with  $s\geq t$ and initial data $X(t)=x\in\R^n$, $Y_\eps(t)=y\in\R$. The function $u(\cdot)$ is the control taking values in a given compact set $U$,  $\eps>0$ is a small parameter, $W$ is a standard $r$-dimensional Brownian motion and $Z$ is a $1$-dimensional  pure jump L\'evi process, independent of $W$. 
We associate to this %control
 system  a payoff functional of the form
\[
\E[e^{c(t-T)} g(X(T)%,Y_\eps(T-)
)\ |\ X(t)=x, Y_\eps(t)=y] , \qquad 0\leq t\leq T, 
\] 
where $c\geq 0$ and $g:\R^{n}\to \R$ is a  continuous function with quadratic growth, that we wish to maximise among admissible control functions $u(\cdot)$. 
The value function of this optimal control problem  is defined as 
\begin{equation}\label{valueintro} 
V^\eps(t,x,y):=\sup_{u(\cdot)}\E[e^{c(t-T)} g(X(T)%, y_\eps(T-)
)].
\end{equation} 
% where $ X(t)=x, Y_\eps(t)=y$, and $(X(\cdot), Y_\eps(\cdot))$ satisfy \eqref{sdeintro} with control $u$.
We are interested in the analysis of the limit as $\eps \to 0$ of the   control problem given by the system \eqref{sdeintro} and  by the value function \eqref{valueintro}. 

Our main motivation comes from financial models with stochastic volatility. In such models %the vector
 $X(s)$ represents, for instance, % a vector of
 the log-prices of $n$
  assets, or the wealth of a portfolio.
  % (under a risk-neutral %pricing probability measure) %and the coefficients of their dynamics, in particularwhose %t
  The
 volatilities of the assets, collected in the matrix $\sigma$, %, are
 are affected by another  process %diffusion
 $Y(s)$ that is usually a diffusion driven by another Brownian motion %, which is often
  negatively correlated  with the one driving the stock prices.
 % (this is the empirically observed leverage effect, i.e., asset prices tend to go down as volatility goes up).
  Fouque, Papanicolaou, and  Sircar %in     the book
 argued in  the book \cite{FPS} that the bursty  behaviour of volatility observed in financial markets can be described by introducing a faster time scale for a mean-reverting diffusion process $Y$. % by means of the small parameter $\epsilon$ in \eqref{sistema0}. 
%There approach is based on asymptotic expaSeveral extensions, applications to a variety of financial problems, and rigorous justifications of the asymptotics can be found in %appeared in %afterward,   see  
 Several result along these lines were found, mostly for problems without controls, e.g., in \cite{fps1, fps2, %BCM, bc, 
 fpss}, %FFF, FFK, fps1, fps2 },
  see also  the references therein. The papers \cite{BCM, bc} by the first two authors introduced viscosity methods to prove the convergence of the singular perturbation for models involving a control variable and therefore associated to a fully nonlinear Hamilton-Jacobi-Bellman %HJB
   equation. The main example in \cite{BCM} was the classical Merton's portfolio optimization problem with volatility depending on an ergodic diffusion process $Y$.
  
On the other hand, the work by  Barndorff-Nielsen and  Shephard \cite{BNS1} showed that processes of Ornstein-Uhlenbeck type driven by a pure-jump L\'evy process are more appropriate models of the volatility than diffusions. 
Several authors studied financial problems with such a non-Gaussian mean-reverting stochastic volatility evolving at the same time scale as the prices: Merton's problem in %Benth, Karlsen, and Reikvam
 \cite{BKR} and various option pricing problems in \cite{NV, Hs2, hs}. The novelty of the present paper is combining multiple scales and stochastic volatility with jumps.  In particular, we extend to this context the % rigorous
 convergence results for asset pricing and for Merton's problem obtained in  \cite{BCM}. 

%We
To conclude this brief bibliographical introduction we refer %survey by referring
 to the books \cite{SATO} for the theory of L\'evy processes and \cite{CT} for their applications to finance.   Let us mention also that the recent paper \cite{LL} deals with a multiscale model whose assumptions are in a sense opposite to ours: the volatility is a diffusion and the slow variable $X$ is driven by a jump process.
%    A paper related to ours is \cite{LL},   Related papers.... \cite{Hs2, hs}

We describe now in more details our result. 
By dynamic programming %principle
 arguments \cite{Ph2, S},  the value function $V^\eps$ in \eqref{valueintro} is a viscosity solution of the integro-differential Hamilton-Jacobi-Bellman equation
\begin{eqnarray}\label{pdeintro}   - V^\eps_t &+& H(x,y, D_x V^\eps, D^2_{xx}V^\eps)    
-\frac{1}{\eps} y\cdot D_y V^\eps  \\&-&\frac{1}{\eps}      \int_0^{+\infty} (V^\eps(t,x,z+y)-V^\eps(t,x,y)- D_yV^\eps (t,x,y)\cdot z \,1_{ |z| \leq 1}) d\nu (z)+cV^\eps= 0 \nonumber \end{eqnarray} 
in $(0,T)\times\R^n\times\R$, with terminal data $V^\eps(T,x,y)=g(x)$, 
where $\nu$ is the L\'evy measure associated to the process $Z$ and $H$ is a standard Hamiltonian associated to stochastic control problems, see the precise definition  \eqref{H} in Section \ref{4}.
Letting $\eps\to 0$ in \eqref{pdeintro} is a singular perturbation problem, and we treat it by methods of the theory of viscosity solutions to integro-differential PDEs. %such equations.  
 Our main result, see Theorem \ref{ConvTheo},  is the proof of the uniform convergence of $V^\eps$ as $\eps\to 0$ to  the unique viscosity solution $V(t,x)$ of the {\em effective PDE} 
\begin{equation} \label{effintro}
 -V_t%(t,x)
+ \int_{\R} H(x, y, D_x V, D^2_{xx} V) d\mu (y) + cV%(t,x)
=0
\end{equation}
in $(0,T)\times\R^n$, with terminal data $V(T,x)=g(x)$, where $\mu$ is the unique invariant measure of the process $Y_\eps$ (which  is independent of $\eps$, see  Proposition \ref{lem:2}), and $H$ is the Hamiltonian appearing in \eqref{pdeintro}. 
 
The second step in the solution of the singular perturbation problem is  to interpret  the effective equation \eqref{effintro} as the Hamilton-Jacobi-Bellman equation for a  limit {\em effective control problem}.
% This %is appears as   a very hard problem for which there are %, and there are  no    general methods. % to solve it. 
This  can be done (Remark \ref{effcontrol}) by a general relaxation procedure taken from \cite{BTer}. However a simpler and explicit 
%In this paper we provide the 
representation of the effective systems can be given for our two main model problems, that is, the asset pricing and the Merton's optimisation problem, see Section \ref{7}. In particular we show that in the limit $\eps\to 0$ the asset pricing problem converges to a new asset pricing with constant volatility
    $$\tilde{\sigma}^2:=%\Bigl(
    \int_{\R} \sigma^2(y) \mu(dy) ,%\Bigr )^{\frac{1}{2}}
    $$
    where $\mu$ is the invariant measure of the process $Y_\eps$, whereas the portfolio optimisation converges to a Merton's problem with %the %smaller 
    constant volatility given by the harmonic mean of $\sigma(y)$, namely,
    $$
    \overline{\sigma}^2:=%\Bigl
    \left(\int_{\R}\frac{1}{\sigma^2(y)}\mu(dy)%\Bigr 
    \right)^{-1%\frac{1}{2}
    } ,
  $$
  which is smaller than $\tilde{\sigma}$.

The proofs of our results  rely on several tools, %some probabilist
  among which we mention the exponential ergodicity of the fast process $Y$, proved  by   Kulik \cite{K}, some properties of viscosity solutions for integrodifferential equations \cite{%S, 
  Sa, Ph2, BI, C}, and the perturbed test function method introduced by Evans for periodic homogenization \cite{E} and extended to singular perturbations in \cite{ab, AB2}. To adapt this method to the current setting of unbounded fast variables $y$, we suitably modify  the  perturbed test function by means of a Lyapunov function associated to the process $Y$ (this is an improvement also with respect to \cite{BCM, bc}). 
  
  Our approach is flexible enough to deal with more general problems, such as integral payoffs and slow variables $X$ depending also on a jump process (under more restrictive growth conditions on the data). Moreover the  fast variables $Y$ could be assumed to be vectorial and depending on %several 
  a combination of jump and diffusion processes, provided that the resulting process be uniformly ergodic and its generator satisfy the strong maximum principle. The last requirement is a limitation to the applicability of our method: if tested on processes $Z$ whose L\'evy measure $\nu$ has $\supp(\nu)\subseteq [0, +\infty)$ it forces us to assume 
   \[
   \int_{|z|\leq 1} |z|\nu (dz)=+\infty ,
    \]
    whereas the opposite case is treated, e.g., in \cite{BKR}.  On the other hand, all processes $Z$ generated by a fractional Laplacian fit our assumptions.

    The paper is organized as follows. Section \ref{2} describes the basic assumptions on the optimal control problem. Section \ref{3} is devoted to the assumptions on the volatility process $Y$ and its properties, in particular the exponential ergodicity and the strong maximum principle and Liouville property of the generator.
    Section \ref{4} describes the partial integrodifferential HJB equation associated to $V^\eps$. In Section \ref{5} we study the cell problem that allows to identify the effective Hamiltonian for the limit PDE. Section \ref{6} contains the statement and proof of the convergence theorem. In Section \ref{7} we apply the previous theory to financial models.  
 % since it forces us to assume that the 
  
%Aim of the present project is to extend the results obtained in  \cite{BCM} to a  class of financial models which exhibits both stochastic volatility and jumps.
%The model problems come from financial problems with fast mean reverting pure jump volatility. Financial models with pure jump volatility have been proposed in \cite{BNS1}, and the Merton protfolio optimization problem has been studied in this setting in \cite{BKR}. Our aim is to consider in this paper the case of fast mean reverting pure jump volatility.  For financial problems with  fast mean reverting volatility  see \cite{FPS}.

\section{Standing assumptions on the  control system}
\label{2}
 We consider the  controlled stochastic differential equation  
\begin{equation}\label{sde} \begin{cases} 
dX(s)=f(X(s), Y_\eps(s^-), u(s)) ds+\sigma(X(s),Y_\eps(s^-), u(s))dW(s)  & X(t)=x 
 \in \R^n\\
 dY_\eps(s)=   -\frac{1}{\varepsilon }Y_\eps(s^-)ds+dZ\left(\frac{1}{\varepsilon }s\right)  &Y_\eps(t)
 \in \R \\  \end{cases}\end{equation}
where  $\eps>0$ is a small parameter,  $W(t)=(W^1(t),\dots,W^r(t))$ is a r-dimensional Brownian motion and $Z(t)$ is a pure jumps L\`evy process. Furthermore, we assume that $W$ and $Z$ are independent. 

We assume the standard conditions on coefficients  $f:\R^n\times\R\times U\to\R^n$, $\sigma:\R^n\times\R\times U\to \Mi^{n,r}$, where
$\Mi^{n,r}$ denotes the  set of $n\times r$ matrices. In particular we assume that $f,\sigma$ are  continuous functions, Lipschitz continuous in $(x,y)$ uniformly w.r.t. $u\in U$,  where $U$ is a compact set  and   that $f(x,\cdot, u)$, $\sigma(x,\cdot, u)$ are bounded for every $(x,u)$. 
%The set of admissible control functions is as usual 
% \[
% \mathcal{U}:=\{u_\cdot\ \text{   progressively measurable processes taking values in  }U\}.
% \]
We say that a process $u(\cdot)$ on the time interval $[0,T]$ is an admissible control function if it takes values in $U$ and is progressively measurable with respect to the filtration generated by $W(\cdot)$ and $Z(\cdot)$, and we set 
 \[
 \mathcal{U}:=\{u(\cdot)\ \text{   admissible control function}\}.
 \]

We will not make any non-degeneracy assumption on $\sigma$. 

%We assume that if, for some $i=1,\dots n$,    $x_i=0$ then $f_i(x,y,u)=0$   and $\sigma_{ij}(x ,y,u)=0$ for every $j=1,\dots,r$ for every $y\in\R, u\in U$.  
To fit financial models, we assume that the $i$-th component of the velocity of $X$ vanishes when $X_i(s)=0$, i.e., 
\begin{equation}
\label{zero}
 %if, for some $i=1,\dots n$,    $
 x_i=0 
 \quad \Longrightarrow \quad f_i(x,y,u)=0, \; \sigma_{ij}(x ,y,u)=0, \quad\forall\, j=1,\dots,r, \, y\in\R,\, u\in U .  
\end{equation}
We set 
\[
\R^n_+:=\{x\in\R^n\,:\,x_i>0 \text{ for every }i=1,\dots,n\}.
\]
Note that under the previous assumption, then the set $\overline{\R^n_+}\times \R$ is invariant under the stochastic process \eqref{sde}. This means that if the initial data satisfies $x_i\geq 0$ for some $i$, then every solution to \eqref{sde} satisfies  $X_i(s)\geq 0$ almost surely for every $s\geq t$. 

We consider for simplicity a payoff functional depending only on the position of the system at a fixed terminal time $T>0$.   The utility function $g:\R^n_+\to\R$ is a continuous  function satisfying
\begin{equation}
\label{GrowthC}
\exists\,K>0,   \quad \text{such that } \quad |g(x)|\le K(1+|x|^2) \quad \text{for every } x\in\R^n_+  \\
\end{equation}   
and the discount factor is $c\geq 0$. 
 
Therefore the value function of the optimal control problem is
\begin{equation}
\label{PO}
V^\eps(t,x,y):=\sup_{u\in\mathcal{U}}\E[e^{c(t-T)}g(X(T))\,|\, X(t)=x,\, Y_\eps(t)=y], \\
\end{equation}
and $(X(\cdot),\,Y_\eps(\cdot))$ satisfy \eqref{sde} with control $u$. This choice of the payoff is sufficiently general for the application to finance models presented in this paper, but we could easily include in the payoff an integral term keeping track of some running costs or earnings.

\section{The fast subsystem}\label{3}
We consider the process $Y$ in \eqref{sde}, putting $\eps=\frac{1}{\lambda}>0$. $Y$ is an Ornstein-Uhlenbeck non-Gaussian process driven by a L\'evy process $Z$, \emph{i.e.}
\begin{equation}
\label{Fast}
d Y_\lambda(s)=-\lambda Y_\lambda(s^-) ds + dZ(\lambda s), \\
\end{equation} 
where $\lambda>0$ is the rate of mean reversion.
We assume that the process $Z$ is a %increasing  
pure-jump % homogeneous
 L\'evy process with no drift, i.e., a L\'evy process whose L\'evy-Ito decomposition has null continuous part, and we choose its cadlag (RCLL) version.
 % with corresponding L\'evy measure $\nu$. 
 We refer to the monograph \cite{SATO}  for a general introduction to L\'evy processes.   

The process $Y_\lambda(s)$, with initial datum $Y_\lambda(0)=y\in\R$, can be explicitly written as
\[
Y_\lambda(s)=ye^{-\lambda s}+\int_0^se^{\lambda(u-s)} dZ(\lambda u).  
\]
We associate to the process $Z(s)$ its L\'evy measure  $\nu$. Intuitively speaking, the L\'evy  measure describes the expected number of
jumps of a certain height in a time interval of length 1, i.e. 
\[
\nu(B)=\E(\#\{s\in[0,1],\,\,Z(s)-Z(s^-)\neq 0,\,\,Z(s)-Z(s^-)\in B \} ). \]

The L\'evy  measure has no mass at the origin, while singularities (i.e. infinitely many jumps) can
occur around the origin (i.e. small jumps). Moreover, the mass away from
the origin is bounded (i.e. only a finite number of big jumps can occur).   
In  particular $\nu$ satisfies the following integrability condition (see \cite{SATO}): 
\begin{equation} 
\label{int}
\int_{|z|<1}|z|^2\nu(dz)+\int_{|z|\geq 1}\nu(dz) <\infty. 
\end{equation}
 We will consider L\`evy processes with infinite activity, that is, such that $\nu(\R) = +\infty$. In this case  almost all paths of the process $Z$ have an infinite number of jumps on every compact interval.

The infinitesimal generator of this process $Y_\lambda$ (see \cite{SATO}, Thm. 31.5) is given by $\lambda \mathcal{I}$, where $\mathcal{I}$ is defined as follows 
\begin{equation}\label{l} \Lu(y,  [f]  )  =  -   f'(y)\cdot  y    +   \int_0^{+\infty} (f(z+y)-f(y)-f'(y)z 1_{ z \leq 1}) d\nu (z). \end{equation}

\subsection{Standing assumptions on the L\'evy process} 
Besides the integrability condition \eqref{int} we need to assume some other condition on the measure $\nu$. 
The first one is a quite standard non degeneracy condition (see \cite{BI}). In particular we assume that the measure is singular at $0$ and we introduce a parameter  $p\in (0,2)$ characterizing the singularity of the measure at $0$.   
\begin{assumption} 
\label{A1}
There exist  $C>0$,  $p\in(0,2)$ such that for every $0<\delta\le1$ 
 \begin{equation}\label{a1} 
\int_{|z|\leq \delta} |z|^2 \nu(dz) \ge C \delta^{2-p}.
\end{equation}
\end{assumption}

\begin{remark}
\upshape An equivalent way  to state \eqref{a1}  is the following: there exists $r\in(0,2)$ such that
\begin{equation} \label{a1n} 
\delta^{-r}\int_{|z|\leq \delta} |z|^2 \nu(dz) \to +\infty \quad\quad \text{as } \delta\to 0^+. \\
\end{equation} Indeed  if \eqref{a1} holds then  \eqref{a1n} is satisfied   for   $r\in (0,2-p)$. Viceversa, if \eqref{a1n} holds, then \eqref{a1} is satified with $p\in(0, 2-r)$.  
\end{remark}

\begin{remark}\upshape \label{crit}
There is a general difference between the case in which the constant $p$ in \eqref{a1}
% satisfies $p\in (0,1)$, which is referred as the supercritical case, the case
is above or below the critical value  $p=1$.
% which is referred as the critical case and the case in which $p\in(1,2)$ which is referred as the subcritical case.  
 In particular if  $p\in (0,1)$, using \eqref{a1n}, it can be proved that \[\int_{|z|\leq 1} |z|\nu (dz)<+\infty, \] and then that almost all   paths of $Z$ have finite variation. If  $p\in (1,2)$, on the other side,    \[\int_{|z|\leq 1} |z|\nu (dz)=+\infty, \] and   almost all   paths of $Z$ have infinite variation (see \cite[Thm. 21.9]{SATO}). 
\end{remark}

We assume a stronger integrability condition at infinity than \eqref{int} of the measure $\nu$. 
\begin{assumption} 
\label{A3}
There exists $q>0$ such that
\begin{equation}\label{a3}
\int_{|z|>1}|z|^q \nu(dz) < +\infty .
 \end{equation}\end{assumption} 
 It can be proved (see \cite[Thm. 25.3]{SATO}) that under Assumption \ref{A3}, the process $Z(t)$   has finite $q$-th moment for every $t$, that is, $\E (|Z_t|^q)<\infty$.

As we will prove in Section \ref{erg} Assumptions \ref{A1},\ref{A3} are sufficient to prove unique ergodicity of the process $Y_\lambda$ defined in \eqref{Fast}. 
Nevertheless,  we need to add  a technical assumption. Indeed the  non degeneracy assumption \eqref{a1} is not sufficient to assure the validity of a Strong Maximum Principle for the associated operator $\Lu$ (see Remark \ref{subordinator}). So we assume also the following. 
\begin{assumption} \label{A2} 
At least one of the following conditions holds:\begin{itemize} 

 \item[i)]   the parameter $p$  in \eqref{a3} satisfies $p\in (1,2)$, 
\item[ii)]  $\R$ can be covered by translations of the support $\supp (\nu)$ of the measure $\nu$%measure support
, \emph{i.e.},
\begin{equation}\label{assu}
\R=\bigcup_{n\ge 0}\left(\,\underbrace{\supp(\nu)+\dots+\supp(\nu)}_n\,\right) .
\end{equation} 
%  where $\supp (\nu)$ is the support of the measure $\nu$. 
\end{itemize} 
\end{assumption}
\begin{remark}
\upshape \eqref{assu}  can be replaced by the requirement that 
$0$ belongs to the topological interior of the measure support $\supp(\nu)$. 
\end{remark}

The main examples of   L\'evy processes satisfying the previous assumptions are the \emph{$\alpha$-stable L\'evy processes}, defined on the whole space or restricted to the half space. We recall that $Z(t)$ is an $\alpha$ stable process  if $Z(t) t^{1/\alpha}= Z(1)$ for all $t$, in the sense that the two processes have the same law.

\begin{example}\upshape
Consider the  $\alpha$-stable L\'evy process with measure 
\begin{equation}
\label{FL}
\nu(dz)=\frac{dz}{|z|^{1+\alpha}}, \\
\end{equation}
where $\alpha\in(0,2)$. The generator of this process is the  fractional Laplacian $(-\Delta)^{\alpha/2}$ (see \cite{BI}). Assumption \ref{A1} holds with $p=\alpha$, and Assumption \ref{A3} holds with $q<\alpha$. Finally the case \emph{ii)} \eqref{assu} in Assumption \ref{A2} %[ii]
 holds. 

 \end{example} 
\begin{example} 
\upshape Consider the  $\alpha$-stable L\'evy process  
 restricted to the half space, with measure 
\begin{equation}
\label{FLhs}
\nu(dz)=1_{\{z\ge0\}}(z)\frac{dz}{z^{1+\alpha}}. \\
\end{equation}   In this case the condition \emph{ii)} \eqref{assu} in Assumption \ref{A2} %[ii]
 is no more valid, so we assume that  $\alpha\in(1,2)$ to ensure  the condition \emph{i)}. 
 \end{example}

\begin{remark}\label{sub1}\upshape We remark that Assumption \ref{A2} cannot hold if $Z$ is a subordinator. 
Subordinators are 1-dimensional L\'evy processes with non-decreasing sample paths.
 It's easy to deduce that a 1-dimensional L\'evy process is a subordinator if and only if its  associated measure satisfies \begin{equation}\label{sub} \nu((-\infty,0])=0\qquad \int_0^{1}|z|\nu(dz)+\int_1^{+\infty}\nu(dz)<+\infty.\end{equation} This implies in particular that $\nu$ cannot satisfy Assumption \ref{A2} (see Remark \ref{crit}). 
%An 
Among $\alpha$-stable processes %subordinator is the process 
with associated  L\'evy measure $\nu$ defined by \eqref{FLhs}
%\begin{equation}\notag
%d\nu(z)=\frac{1_{[0,+\infty)}(z)dz}{z^{1+\alpha}}, \quad \alpha\in(0,1). 
%\end{equation}
the subordinators are those with $\alpha\in(0,1)$. A notable example is the inverse Gaussian, corresponding to %very important stable subordinator (of exponent  
$\alpha=\frac12$. It can be defined as $Z(s):=\inf\{t\ |\ W(t)>s\}$, where  $W$ is a $r$-dimensional Brownian motion.
%) is the following (it is the inverse Gaussian). Let $W$ a $r$ dimensional Brownian motion and define $Z(s)=\inf\{t\ |\ W(t)>s\}$. This is    a stable subordinator with $\alpha=\frac12$. 
(Note moreover that if  $Z$ is a $\alpha$-stable subordinator and $W$ is an independent Brownian motion, then $W(Z(t))$ is a stable symmetric L\'evy process of exponent $2\alpha$.)
We will explain in Remark \ref{subordinator} why the proof of convergence presented in this paper does not apply to the subordinators.

\end{remark}
\subsection{Ergodic properties} \label{erg} 
 In this section, we prove that under the previous Assumptions on the L\'evy process $Z$, the process $Y$ defined in \eqref{Fast} is ergodic. In order to prove ergodicity of a process,   two principal features should be checked: recurrence of the process outside some large ball and regularity of the transition probability in some bounded domain.  
The first feature can be provided in a quite standard way via an appropriate version of the
Lyapunov criterium (for the relation between existence of a Lyapunov function for the system and recurrence outside compact sets, we refer to the monograph \cite{has}). We show that Assumption \ref{A3} assures the existence of a Lyapunov function. 

\begin{lemma}
\label{Lyapunov}
Let
\[
\mathcal{Q} = \{ v\in\mathcal{C}^2(\R) | \,\, \exists \,\, \tilde{v}\,\, \text{locally bounded } : \int_1^{+\infty} v(y+z)d\nu(z) \le \tilde{v}(y) \quad \forall\, y \}. \\
\]
Under Assumption \ref{A3} there exists $\varphi\in\mathcal{Q}$ and constant  $a>0$  such that
\begin{equation}
-\Lu[y,\varphi] \geq  a\varphi(y)  \quad \text{and} \quad \varphi \to +\infty \quad \text{as} \quad |y| \to +\infty \\
\end{equation}
\end{lemma}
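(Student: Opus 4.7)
The natural candidate is a polynomial Lyapunov function $\varphi(y)=(1+y^{2})^{\gamma/2}$ with $\gamma\in(0,\min(q,2))$, where $q$ is the moment exponent from Assumption \ref{A3}. The plan is first to check that $\varphi\in\mathcal{Q}$ and then to compute $-\mathcal{L}[y,\varphi]$, showing that the mean-reverting drift dominates the nonlocal contribution.

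For membership in $\mathcal{Q}$ I would use the elementary inequality $(1+(y+z)^{2})^{\gamma/2}\leq c_{\gamma}\bigl[\varphi(y)+|z|^{\gamma}\bigr]$, which is valid for $\gamma\leq 2$ by concavity of $t\mapsto t^{\gamma/2}$. It gives
$$\int_{1}^{+\infty}\varphi(y+z)\,d\nu(z)\leq c_{\gamma}\nu([1,+\infty))\,\varphi(y)+c_{\gamma}\int_{|z|>1}|z|^{\gamma}\,d\nu(z),$$
where $\nu([1,+\infty))$ is finite by \eqref{int} and the tail integral is finite by Assumption \ref{A3} since $\gamma<q$. Hence the right-hand side is locally bounded in $y$.

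To prove the Lyapunov inequality I would split $\mathcal{L}\varphi$ into the drift term, the small-jump integral, and the large-jump integral. The drift contributes $y\varphi'(y)=\gamma\frac{y^{2}}{1+y^{2}}\varphi(y)$, which approaches $\gamma\varphi(y)$ as $|y|\to\infty$. The small-jump integral is controlled uniformly in $y$ by Taylor's formula together with $\tfrac12\|\varphi''\|_\infty\int_{|z|\leq 1}|z|^{2}\,d\nu(z)<\infty$; note $\gamma\leq 2$ ensures $\varphi''$ is bounded and \eqref{int} makes the integral finite. The large-jump integral is the delicate piece: I would partition $\{|z|>1\}$ into $\{1<|z|\leq|y|/2\}$ and $\{|z|>|y|/2\}$. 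On the former, $\varphi(y+z)\leq(3/2)^{\gamma}\varphi(y)$, and when combined with $-\varphi(y)\nu(\{|z|>1\})$ this yields a contribution that is $o(\varphi(y))$ as $|y|\to\infty$; on the latter, $\varphi(y+z)\leq c_{\gamma}(1+|z|^{\gamma})$ together with dominated convergence and Assumption \ref{A3} gives a contribution that tends to zero. Collecting the three estimates,
$$-\mathcal{L}[y,\varphi]\geq \gamma\,\frac{y^{2}}{1+y^{2}}\,\varphi(y)-M-o(\varphi(y))$$
for a constant $M>0$, so $-\mathcal{L}[y,\varphi]\geq a\varphi(y)$ for any $a<\gamma$ and $|y|$ large enough.

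The main obstacle is keeping the multiplicative constant in front of $\varphi(y)$ arising from the large-jump estimate strictly below $\nu(\{|z|>1\})$, so that the cancellation with the explicit $-\varphi(y)\nu(\{|z|>1\})$ term in $\mathcal{L}$ really produces a gain; a naive bound $\varphi(y+z)\leq c[\varphi(y)+|z|^{\gamma}]$ with $c>1$ would spoil the drift gain, and it is precisely the splitting of $\{|z|>1\}$ according to the size of $|z|$ relative to $|y|$ that avoids this. To upgrade the inequality from "$|y|$ large" to all $y\in\R$, one can either shrink $a$ (exploiting continuity and positivity of both sides on the compact region) or modify $\varphi$ by a smooth perturbation concentrated where the bare inequality fails; this is a standard adjustment in Lyapunov constructions of this kind.
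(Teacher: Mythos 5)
The paper does not prove this lemma directly: it cites \cite[Proposition 4.1]{K}, recording only that Kulik's function equals $|y|^{q}$ for $|y|\ge R$ and satisfies $\varphi\le |y|^{q}$ everywhere. Your self-contained verification with $\varphi(y)=(1+y^{2})^{\gamma/2}$ is in the same polynomial spirit, but two of its steps fail as written. The first is the medium-jump estimate. On $\{1<|z|\le |y|/2\}$, the bound $\varphi(y+z)\le (3/2)^{\gamma}\varphi(y)$, combined with the $-\varphi(y)$ part of the integrand, gives only
\[
\int_{1<|z|\le |y|/2}\bigl(\varphi(y+z)-\varphi(y)\bigr)\,d\nu(z)\;\le\;\bigl((3/2)^{\gamma}-1\bigr)\,\nu(\{|z|>1\})\,\varphi(y),
\]
which is a \emph{fixed} positive multiple of $\varphi(y)$, not $o(\varphi(y))$. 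Since $(3/2)^{\gamma}-1\ge \gamma\ln(3/2)$, this multiple exceeds the drift gain $\gamma$ whenever $\nu(\{|z|>1\})>1/\ln(3/2)$, and taking $\gamma$ or $a$ smaller does not help because both the drift coefficient and this bound scale linearly in $\gamma$; the mass $\nu(\{|z|>1\})$ is finite but can be arbitrarily large under the standing assumptions. So the conclusion you claim does not follow from the estimate you state. The inequality for large $|y|$ is in fact true for your $\varphi$, but proving it on this region requires the decay of $\varphi'$ rather than the mere comparability of $\varphi(y+z)$ and $\varphi(y)$: by the mean value theorem the intermediate point $\xi$ satisfies $|\xi|\ge |y|/2$, hence $\varphi(y+z)-\varphi(y)\le \gamma(1+y^{2}/4)^{(\gamma-1)/2}|z|$, and together with Assumption \ref{A3} this piece is $O(|y|^{\gamma-\min(q,1)})=o(\varphi(y))$. (Alternatively, for $\gamma\le\min(q,1)$ the subadditivity bound $\varphi(y+z)\le\varphi(y)+|z|^{\gamma}$ controls the whole region $\{|z|>1\}$ by the constant $\int_{|z|>1}|z|^{\gamma}d\nu$, and no splitting is needed at all.)

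The second gap is the passage from ``$|y|$ large'' to all of $\R$, and here both of your suggested fixes are unworkable because your $\varphi$ is everywhere positive. At its global minimum $y=0$ you have $\varphi'(0)=0$ and $\varphi(z)\ge\varphi(0)$ for all $z$, hence
\[
-\Lu[0,\varphi]=-\int\bigl(\varphi(z)-\varphi(0)\bigr)\,d\nu(z)\;<\;0\;<\;a\,\varphi(0)
\]
for \emph{every} $a>0$: the left-hand side is not positive there, contrary to what you assert, so shrinking $a$ cannot restore the inequality; nor can any perturbation that keeps $\varphi$ positive, since the same computation applies at the new global minimum. Any function satisfying the lemma's inequality on all of $\R$ must be nonpositive at its minimum point --- and this is exactly the role of the condition $\varphi\le |y|^{q}$ (hence $\varphi(0)\le 0$) in the construction of \cite{K} quoted by the paper; it is not a cosmetic detail. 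The correct repair of your endgame is cheap: $\Lu$ annihilates constants, so replace $\varphi$ by $\varphi-c$. Outside a large ball, $-\Lu[y,\varphi]\ge a\varphi(y)\ge a(\varphi(y)-c)$ by your (repaired) estimate; on the remaining compact set $-\Lu[y,\varphi]$ is continuous, hence bounded below, while $\max_{|y|\le R}\,a(\varphi(y)-c)\to-\infty$ as $c\to+\infty$, so the inequality holds on all of $\R$ once $c$ is large enough, and $\varphi-c$ still belongs to $\mathcal{Q}$ and tends to $+\infty$.
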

\begin{proof} For the proof we refer to   \cite[Proposition 4.1]{K}. The idea is to construct $\varphi$ such that $\varphi(y)\leq|y|^q$ for all $y\in\R$ and $\varphi(y)=|y|^q$ for $|y|\geq R$, for $R$ sufficiently large.
\end{proof}

Secondly, Assumption \ref{A1}  provides the regularity of the transition probability of the process $Y$. 

\begin{lemma}
\label{lem:1}
Under Assumption \ref{A1},   the Ornstein-Uhlenbeck process $Y_\lambda(t)$ has a $\mathcal{C}^{\infty}$ density with all bounded derivatives for all $t$ and every initial data $y$. 
\end{lemma}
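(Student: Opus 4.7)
The plan is to analyze the characteristic function of $Y_\lambda(t)$ via Fourier methods and show it decays faster than any polynomial at infinity; then smoothness with bounded derivatives follows by Fourier inversion. The key point is that Assumption \ref{A1} forces the symbol of the generator of $Z$ to grow at least like $|\xi|^p$, which is then propagated through the OU dynamics.

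First I would write the explicit representation
\[
Y_\lambda(t) = y e^{-\lambda t} + \int_0^t e^{\lambda(u-t)}\, dZ(\lambda u),
\]
so that, denoting by $\psi(\xi) = \int_{\R}(1 - e^{i\xi z} + i\xi z\,\mathbf{1}_{|z|\le 1})\,d\nu(z)$ the characteristic exponent of $Z$, the characteristic function of $Y_\lambda(t)$ is
\[
\Phi_t(\xi) := \mathbf{E}\bigl[e^{i\xi Y_\lambda(t)}\bigr] = \exp\!\Bigl(i\xi y e^{-\lambda t} - \lambda\int_0^t \psi\bigl(\xi e^{\lambda(u-t)}\bigr)\,du\Bigr).
\]
This reduces the problem to estimating $|\Phi_t(\xi)| = \exp\!\bigl(-\lambda\int_0^t \mathrm{Re}\,\psi(\xi e^{\lambda(u-t)})\,du\bigr)$.

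Next I would establish the fundamental lower bound $\mathrm{Re}\,\psi(\xi)\ge c_1|\xi|^p$ for $|\xi|\ge 1$. Using $\mathrm{Re}\,\psi(\xi) = \int_{\R}(1-\cos(\xi z))\,d\nu(z)$ and the elementary inequality $1-\cos(s)\ge c_0 s^2$ for $|s|\le 1$, one restricts integration to $|z|\le 1/|\xi|$ and applies Assumption \ref{A1} with $\delta = 1/|\xi|$:
\[
\mathrm{Re}\,\psi(\xi) \ge c_0 |\xi|^2 \int_{|z|\le 1/|\xi|} |z|^2 \,d\nu(z) \ge c_0 C |\xi|^{2}\cdot |\xi|^{-(2-p)} = c_1 |\xi|^p.
\]
Inserting this in the OU formula, for $|\xi|\ge e^{\lambda t}$ the argument $\xi e^{\lambda(u-t)}$ stays above $1$ on all of $[0,t]$, so
\[
\lambda\int_0^t \mathrm{Re}\,\psi\bigl(\xi e^{\lambda(u-t)}\bigr)\,du \ge c_1 \lambda |\xi|^p \int_0^t e^{p\lambda(u-t)}\,du = \frac{c_1 (1-e^{-p\lambda t})}{p}|\xi|^p =: K_t|\xi|^p,
\]
yielding $|\Phi_t(\xi)|\le e^{-K_t|\xi|^p}$ for $|\xi|$ large (with $K_t>0$ whenever $t>0$).

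Finally, the stretched-exponential decay implies $\xi\mapsto |\xi|^k\Phi_t(\xi) \in L^1(\R)$ for every $k\in\mathbb{N}$. By Fourier inversion, $Y_\lambda(t)$ admits a density
\[
p_t(\cdot|y) = \frac{1}{2\pi}\int_{\R} e^{-i\xi\,\cdot}\,\Phi_t(\xi)\,d\xi,
\]
and differentiation under the integral gives $\|\partial^k p_t(\cdot|y)\|_\infty \le \frac{1}{2\pi}\int|\xi|^k e^{-K_t|\xi|^p}\,d\xi < \infty$ for every $k$, with bound independent of $y$ (the initial datum only contributes a unimodular phase factor $e^{i\xi y e^{-\lambda t}}$). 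The main obstacle is really just the characteristic-exponent lower bound in Step 2; once the rate $|\xi|^p$ is extracted from Assumption \ref{A1}, the propagation through the OU integral and the Fourier-inversion argument are routine. This mirrors classical arguments for smoothness of densities of OU-type Lévy processes, so a short proof citing \cite{SATO} for the Lévy--Khintchine formula and referring to the computation above would suffice.
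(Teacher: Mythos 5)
Your proposal is correct, but it supplies something the paper does not even attempt: the paper's entire proof of Lemma \ref{lem:1} is the citation ``We refer to \cite{P} and \cite{PZ}'', deferring to Picard and to Priola--Zabczyk, whose results on densities of Ornstein--Uhlenbeck processes driven by jump noise cover exactly this situation (indeed the equivalent form \eqref{a1n} of Assumption \ref{A1} stated in the paper's Remark is the hypothesis used in \cite{PZ}). Your argument is a self-contained reconstruction of the mechanism behind those references: the explicit characteristic function of the OU process, the lower bound $\mathrm{Re}\,\psi(\xi)\ge c_1|\xi|^p$ for $|\xi|\ge 1$ obtained from Assumption \ref{A1} by truncating the L\'evy measure at $|z|\le 1/|\xi|$ and using $1-\cos s\gtrsim s^2$ there, the propagation of this bound through the exponential weight (valid for $|\xi|\ge e^{\lambda t}$, which suffices since $|\Phi_t|\le 1$ elsewhere), and Fourier inversion with differentiation under the integral sign. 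All steps are sound, including the uniformity in $y$ (the initial datum enters only through a unimodular phase) and the implicit restriction to $t>0$: your constant $K_t$ vanishes as $t\to 0^+$, consistently with the law at $t=0$ being a Dirac mass, so the lemma's ``for all $t$'' must be read as ``for all $t>0$'' in both your version and the paper's. What your route buys is transparency: one sees precisely where the nondegeneracy index $p$ enters and how the decay rate $e^{-K_t|\xi|^p}$ degenerates for small times. What the paper's route buys is brevity and strength: \cite{PZ} gives essentially sharp conditions and treats more general OU-type processes, which would be tedious to reproduce. If your proof were to replace the citation, the only points to polish are to state $t>0$ explicitly and to justify the identity $\E\exp\bigl(i\xi\int_0^t f(u)\,dZ(u)\bigr)=\exp\bigl(-\int_0^t\psi(\xi f(u))\,du\bigr)$ for deterministic integrands by a reference to \cite{SATO}.
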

\begin{proof} 
We refer to \cite{P} and \cite{PZ}.
\end{proof}
We prove now exponential ergodicity of the process $Y$. 
\begin{proposition}
\label{lem:2}
Let Assumptions \ref{A1}, \ref{A3} hold.  Then for any $\lambda>0$, the process $Y_\lambda$ in \eqref{Fast} admits a unique invariant distribution $\mu$, which is independent of $\lambda$. Moreover, it is exponentially ergodic, in the sense that there exists a positive constant $C$ such that for every bounded measurable function $f$ there exists $K>0$ for which 
\begin{equation}\label{es}
\left|\frac{1}{t}\int_0^t\E(f(Y_\lambda(s)))ds - \int_{\R}f(z)\mu(dz)\right| \le  K(1+|y|^q)e^{-Ct} \quad \text{as} \, t\to+\infty, \\  
\end{equation}
where $Y_\lambda$ is the solution to \eqref{Fast} with initial data $Y_\lambda(0)=y$ and $q$ is as in Assumption \ref{A3}.  
\end{proposition}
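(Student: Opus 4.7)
The plan is to combine the Lyapunov drift condition from Lemma \ref{Lyapunov} with the smoothness of the transition density from Lemma \ref{lem:1} to place $Y_\lambda$ in the framework of Meyn--Tweedie's theory of $V$-uniformly ergodic Markov processes. First I would reduce to the case $\lambda=1$: since the generator of $Y_\lambda$ equals $\lambda\Lu$, an invariant probability for $Y_\lambda$ is the same as one for $Y_1$, so $\mu$ does not depend on $\lambda$. Applying Dynkin's formula (with a routine localization, which is why the class $\mathcal Q$ has been introduced) to the Lyapunov function $\varphi$ of Lemma \ref{Lyapunov}, and using $-\Lu[y,\varphi]\geq a\varphi(y)$, I would get
\[
\E[\varphi(Y_1(t))]\leq \varphi(y)e^{-a t},
\]
so that $\{\mathrm{Law}(Y_1(t))\}_{t\geq 0}$ is tight and a Krylov--Bogolyubov averaging argument produces an invariant distribution $\mu$ with $\int\varphi\,d\mu<\infty$.

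For the quantitative convergence I would invoke a Doeblin-type minorization. Lemma \ref{lem:1} provides a smooth transition density on all of $\R$, and Assumption \ref{A2} ensures that starting from any point the process can reach any open set in finite time; together these yield, for some $t_0>0$, a compact $K\subset\R$, a constant $\delta\in(0,1)$ and a probability measure $\psi$ on $\R$,
\[
P_{t_0}(y,\cdot)\geq \delta\,\psi \qquad \forall\, y\in K,
\]
i.e.\ $K$ is a \emph{small set} in the Meyn--Tweedie sense. Combined with the geometric drift estimate above (applied to $V:=1+\varphi$), the Down--Meyn--Tweedie theorem on $V$-uniform ergodicity delivers constants $C,C_1>0$ with
\[
\|P_t(y,\cdot)-\mu\|_V\leq C_1 V(y)\,e^{-Ct}
\]
in the $V$-weighted total variation norm, from which uniqueness of $\mu$ follows at once. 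For any bounded measurable $f$ one has $|f|\leq \|f\|_\infty V$, so the previous inequality gives $|\E f(Y_1(t))-\int f\,d\mu|\leq C_1\|f\|_\infty V(y)\,e^{-Ct}$; time-averaging, using the growth $\varphi(y)\leq c_0(1+|y|^q)$ from the construction in Lemma \ref{Lyapunov}, and rescaling back to arbitrary $\lambda>0$ via $Y_\lambda(s)=Y_1(\lambda s)$, yields the estimate \eqref{es}.

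The main obstacle is the Doeblin minorization step. In a purely jump-driven, possibly degenerate setting the smoothness supplied by Lemma \ref{lem:1} alone is not sufficient, because one also needs a form of irreducibility to turn the local density bound into a global small-set condition; Assumption \ref{A2} is exactly the hypothesis that provides this irreducibility (either through the infinite-variation regime $p\in(1,2)$ or through the support spanning condition on $\nu$). Carrying this out carefully in the Ornstein--Uhlenbeck--L\'evy context is the analytic content of Kulik's theorem in \cite{K}, which the author invokes directly for the full statement.
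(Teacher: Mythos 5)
Your overall architecture (Lyapunov drift plus small-set minorization, yielding $V$-uniform ergodicity \`a la Meyn--Tweedie, with the $\lambda$-independence handled by the time change $Y_\lambda(s)=Y_1(\lambda s)$ or by generator scaling) is a legitimate alternative to the paper's proof, which instead obtains existence and $\lambda$-independence of $\mu$ from the explicit self-decomposable form of the invariant law in \cite[Thm.\ 17.5, Cor.\ 17.9]{SATO} and then cites Kulik \cite[Thm.\ 1.1, Prop.\ 0.1]{K} for \eqref{es}. However, there is a genuine gap: Proposition \ref{lem:2} assumes only Assumptions \ref{A1} and \ref{A3}, while your minorization step explicitly relies on Assumption \ref{A2}, which you call ``exactly the hypothesis that provides this irreducibility''. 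This extra hypothesis is not harmless: the proposition is stated, and is true, precisely in a generality where Assumption \ref{A2} may fail. For instance, for the Ornstein--Uhlenbeck process driven by a one-sided $\alpha$-stable subordinator with $\alpha\in(0,1)$ (measure \eqref{FLhs}; see Remarks \ref{sub1} and \ref{subordinator}), Assumption \ref{A2} fails and the Strong Maximum Principle is indeed false, yet the unique invariant distribution exists and exponential ergodicity still holds. In this OU setting the $\psi$-irreducibility needed for a small set does not come from Assumption \ref{A2}: it comes from the mean-reverting drift (which lets the process approach the support of the invariant law with positive probability) combined with the jump activity quantified by Assumption \ref{A1}; Assumption \ref{A2} is needed in the paper only for Theorem \ref{SMPFL} and the Liouville property, not here. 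As written, your argument therefore proves a strictly weaker statement and misidentifies the mechanism of irreducibility.

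Two further remarks. First, your final step loses the rate: from the pointwise bound $|\E f(Y_1(t))-\int_{\R} f\,d\mu|\le C_1\|f\|_\infty V(y)e^{-Ct}$, time-averaging gives $\frac1t\int_0^t e^{-Cs}ds\le \frac{1}{Ct}$, i.e.\ a rate of order $1/t$, not $e^{-Ct}$, so ``time-averaging \dots yields \eqref{es}'' does not follow as stated; the natural output of your method (and of Kulik's theorem) is the pointwise-in-$t$ estimate, which is in fact all that is needed downstream in Corollary \ref{Ab}. Second, on the positive side, your reduction to $\lambda=1$ is correct and is a neat alternative to the paper's route, which reads the $\lambda$-independence off the explicit formulas for the drift and the L\'evy measure of $\mu_\lambda$ coming from \cite{SATO}.
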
 
\begin{proof} 
In \cite[ Thm. 17.5, Cor. 17.9]{SATO} it is proved that the process $Y_\lambda$  for every $\lambda>0$ has an invariant distribution $\mu_\lambda$. Moreover, $\mu_\lambda$ is the distribution of a L\'evy process with drift
\begin{equation}\notag
\gamma=\frac{1}{\lambda}\int_{|y|>1}\lambda\nu(dy)=\int_{|y|>1}\nu(dy), \\
\end{equation}
and L\'evy measure
\begin{equation}\notag
\mu(B)=\frac{1}{\lambda}\int_{\R}\lambda \int_0^{+\infty}1_{B} (e^{- s}y) ds\nu(dy)= \int_{\R} \int_0^{+\infty}1_{B} (e^{- s}y) ds\nu (dy), \\
\end{equation} 
for every $B\in\mathcal{B}(\R)$. So, the time scaling $dZ(\lambda t)$ assures that the invariant distribution $\mu_\lambda$ of $Y_\lambda$ is independent of $\lambda$. The exponential ergodicity \eqref{es} of $Y$ is proved in \cite[Thm. 1.1 and Prop. 0.1]{K}.
 \end{proof}
 \begin{corollary}\label{Ab}Under the same assumptions and notations of Proposition \ref{lem:2},  for every bounded measurable function $f$ there exists a constant $K>0$ such that   \begin{equation} \label{delta} 
\left|\delta\int_0^{+\infty}\E f(Y_\lambda(t))e^{-\delta t}dt- \int_{\R} f(z)\mu(dz )\right|\le  K(1+|y|^q)  \delta\quad \text{as} \, \delta\to 0^+.  
\end{equation} 
 \end{corollary}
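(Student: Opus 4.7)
The plan is to reduce the Abel-type bound in \eqref{delta} to the Cesàro-type bound \eqref{es} of Proposition \ref{lem:2} via a single integration by parts. The key identity is an Abel–Cesàro relation: if one writes $h(t):=\E f(Y_\lambda(t))$, $G(t):=\int_0^t h(s)\,ds$, and $\bar f:=\int_{\R} f\,d\mu$, then integrating by parts one has
\[
\delta\int_0^{+\infty} e^{-\delta t} h(t)\,dt \;=\; \delta^2 \int_0^{+\infty} e^{-\delta t} G(t)\,dt,
\]
the boundary terms vanishing because $|G(t)|\le t\|f\|_\infty$ and $e^{-\delta t}G(t)\to 0$ at infinity. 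Since $\int_0^{+\infty} t\,e^{-\delta t}\,dt = 1/\delta^2$, one can write $\bar f = \delta^2\int_0^{+\infty} e^{-\delta t}(t\bar f)\,dt$, and subtracting gives the exact representation
\[
\delta\int_0^{+\infty} e^{-\delta t}\E f(Y_\lambda(t))\,dt - \bar f \;=\; \delta^2 \int_0^{+\infty} e^{-\delta t}\bigl(G(t) - t\bar f\bigr)\,dt.
\]

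Next I would feed in Proposition \ref{lem:2}. Rewriting \eqref{es} as $|G(t)-t\bar f|\le K(1+|y|^q)\,t\,e^{-Ct}$ for $t$ large, I note that for small $t$ we have the trivial bound $|G(t)-t\bar f|\le 2t\|f\|_\infty$; since $e^{-Ct}$ is bounded below on any compact interval, enlarging the constant if necessary we obtain
\[
|G(t) - t\bar f| \;\le\; K(1+|y|^q)\, t\, e^{-Ct} \qquad \text{for all } t\ge 0.
\]
Plugging this into the representation and computing the elementary integral yields
\[
\Bigl|\delta\int_0^{+\infty} e^{-\delta t}\E f(Y_\lambda(t))\,dt - \bar f\Bigr| \;\le\; K(1+|y|^q)\,\delta^2 \int_0^{+\infty} t\, e^{-(\delta+C)t}\,dt \;=\; \frac{K(1+|y|^q)\,\delta^2}{(\delta+C)^2}.
\]
For $\delta\to 0^+$ the right-hand side is $O(\delta^2)$, in particular bounded by a constant multiple of $(1+|y|^q)\,\delta$, which is \eqref{delta}.

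There is essentially no hard step here: the proof is a clean Abel summation argument converting a Cesàro rate to an Abel rate. The only point requiring a moment's care is extending the ergodic estimate \eqref{es} from ``as $t\to+\infty$'' to a uniform bound on $[0,+\infty)$ (handled by the trivial sup-norm bound on a compact initial interval), so that the integration against $\delta^2 e^{-\delta t}$ is legitimate all the way down to $t=0$.
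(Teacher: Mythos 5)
You derive the corollary correctly from Proposition \ref{lem:2} as stated, and with the same engine the paper uses: an Abelian, integration-by-parts argument that converts the Ces\`aro estimate \eqref{es} into an Abel estimate. The execution, though, is genuinely different. The paper first discards the time interval $[0,1]$ (an explicit $O(\delta)$ cost), works with $F_y(t)=\int_1^t\E f(Y_\lambda(s))\,ds$ and the change of variables $s=\delta t$, and the correction terms created by that truncation are exactly what cap its final bound at order $\delta$. You instead keep the whole half-line, write the exact identity
\[
\delta\int_0^{+\infty}e^{-\delta t}\E f(Y_\lambda(t))\,dt-\bar f=\delta^2\int_0^{+\infty}e^{-\delta t}\bigl(G(t)-t\bar f\bigr)\,dt ,
\]
and absorb the small-$t$ regime into the exponential bound by enlarging the constant on a compact interval; this is legitimate and cleaner. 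But notice that your route outputs $O(\delta^2)$, strictly stronger than the $O(\delta)$ asserted in \eqref{delta}, and this should give you pause rather than satisfaction: exponential decay of Ces\`aro averages, as literally written in \eqref{es}, cannot hold in general. What Kulik's theorem actually provides is exponential decay of $|\E f(Y_\lambda(t))-\bar f|$ itself; consequently $G(t)-t\bar f$ tends to the constant $A=\int_0^{+\infty}\bigl(\E f(Y_\lambda(s))-\bar f\bigr)\,ds$, which is generically nonzero, so the Ces\`aro error is of exact order $1/t$ and the Abel error of exact order $\delta$. Your $\delta^2$ is therefore an artifact of taking \eqref{es} at face value. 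The good news is that your decomposition is robust to this correction: inserting the honest bound $|G(t)-t\bar f|\le K(1+|y|^q)/C$ (uniform in $t$) into your identity immediately yields the estimate $K(1+|y|^q)\delta/C$, which is exactly \eqref{delta}; only the spurious $\delta^2$ improvement is lost.
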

 \begin{proof}
The   property \eqref{delta} can be deduced from \eqref{es} using a result of   Abelian-Tauberian type, % result (
see  \cite[Thm. 10.2]{Si}. For completeness we give the   proof in our  case. 
First of all note that  it is equivalent to prove, instead of \eqref{delta}, that \begin{equation} \label{delta1} 
\left|\delta\int_1^{+\infty}\E f(Y_\lambda(t))e^{-\delta t}dt- \int_{\R} f(z)\mu(dz ) \right|\le  K(1+|y|^q)  \delta\quad \text{as} \, \delta\to 0^+.  
\end{equation}
We denote $\int_{\R} f(z)\mu(dz )=M$.

Fix $y\in\R$ and define $F_y(t):= \int_1^t \E f(Y_\lambda(s))ds$, for $t\geq 1$. By integration by parts, we get 
\begin{equation}\label{uno}
\delta\int_1^{+\infty}e^{-\delta t}\E(f (Y_\lambda(t)))dt= \delta^2\int_{1}^{+\infty}e^{-\delta t}F_y(t)dt  = \delta \int_{\delta}^{+\infty}e^{-s}F_y\left(\frac{s}{\delta}\right)ds.  
\end{equation} Note that, for $\delta>0$ fixed,  by \eqref{es} \[\frac{\delta}{s} F_y\left(\frac{s}{\delta}\right) \to  M \qquad\text{ as  } s\to +\infty. \] Therefore $F_0=\max_{s\geq\delta} \left|\frac{\delta}{s} F_y\left(\frac{s}{\delta}\right) \right|<\infty$ and  then \[  \delta \ e^{-s}F_y\left(\frac{s}{\delta}\right)= e^{-s}s \frac{\delta}{s}F_y\left(\frac{s}{\delta}\right)\in L^1(\delta, +\infty).\]

 By \eqref{es} we obtain 
\begin{equation}\label{due}
\left|\frac{\delta}{s} F_y\left(\frac{s}{\delta}\right) -M\right|\le K(1+|y|^q)e^{-C\frac{s}{\delta}}+\frac{\delta}{s}\|f\|_\infty, \\
\end{equation}
for $s$ fixed. Therefore, using \eqref{uno} and \eqref{due} we get 
\begin{align*} & \left|\delta\int_1^{+\infty}e^{-\delta t}\E(f (Y_\lambda(t)))dt-M\right| & \\ \leq &  \int_{\delta}^{+\infty}
e^{-s}s\left| \frac{\delta}{s}F_y\left(\frac{s}{\delta}\right)-M\right|ds + (1-e^{-\delta}+\delta e^{-\delta})M    &  \\ \leq  &
\int_{\delta}^{+\infty}
e^{-s}s K(1+|y|^q)e^{-C\frac{s}{\delta}}ds+\int_{\delta}^{+\infty}e^{-s}\delta \|f\|_\infty ds + (1-e^{-\delta}+\delta e^{-\delta})M &   \\  \leq  &
 K(1+|y|^q)e^{-C} \delta^2\frac{C+1}{C^2}+ e^{-\delta}\delta \|f\|_\infty + (1-e^{-\delta}+\delta e^{-\delta})M  
\end{align*}
which gives the desired result. 
\end{proof}

\subsection{Liouville property} 
Finally we state and prove a Liouville property for our integro-differential operator $-\Lu$. This is based on the following Strong Maximum Principle.

 \begin{theorem}[Strong Maximum Principle] \label{SMPFL}
Assume the measure $\nu$ satisfies Assumptions \ref{A1} and \ref{A2}. Let $u\in USC(\R)$ be a viscosity subsolution of 
\begin{equation}
-\Lu[y,u]\leq 0 \qquad \text{in } \R.  
\end{equation}
If $u$ attains a global maximum at $y_0\in\R$, then $u$ is constant on $\R$.
\end{theorem}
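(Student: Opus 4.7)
The plan is to set $M := \max u = u(y_0)$ and $\Omega := \{y \in \R : u(y) = M\}$, which is closed (by upper semicontinuity of $u$) and non-empty, and to prove that $\Omega = \R$ by propagating the maximum along the support of $\nu$.

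The main ingredient is a \emph{propagation lemma}: if $y^* \in \Omega$, then $u \equiv M$ on $y^* + \supp(\nu)$. To establish it I test $u$ from above at $y^*$ with the constant function $\varphi \equiv M$, which is admissible because $u \leq M$ everywhere with equality at $y^*$. Applying the Barles--Imbert mixed viscosity formulation from \cite{BI}, I split the integral in \eqref{l} at the threshold $|z|=\delta$, keeping $\varphi$ on $\{|z|\leq\delta\}$ and replacing $\varphi$ by $u$ on $\{|z|>\delta\}$. Since $\varphi$ is constant, the drift term $-y^*\varphi'(y^*)$ and the singular integral on $\{|z|\leq\delta\}$ both vanish identically, and the subsolution inequality $-\Lu[y^*,\varphi]\leq 0$ reduces to
\[
\int_{|z|>\delta} \bigl(u(y^*+z) - M\bigr)\, d\nu(z) \geq 0.
\]
The integrand being non-positive, it must vanish for $\nu$-almost every $z$ with $|z|>\delta$; letting $\delta \to 0$ and invoking upper semicontinuity of $u$ together with the definition of $\supp(\nu)$ yields $u(y^*+z) = M$ for every $z \in \supp(\nu)$.

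Under case \emph{ii)} of Assumption \ref{A2} the conclusion follows immediately by iteration: applying the propagation lemma repeatedly starting from $y_0$ gives $\Omega \supseteq y_0 + (\underbrace{\supp(\nu)+\cdots+\supp(\nu)}_{n})$ for every $n\geq 1$, and by \eqref{assu} the union over $n$ exhausts $\R$, so $u \equiv M$.

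The main obstacle is case \emph{i)}, where $p\in(1,2)$ but $\supp(\nu)$ may be one-sided (as in the half-space $\alpha$-stable example), so that iteration of the propagation lemma only yields, for instance, $\Omega\supseteq[y_0,+\infty)$. To extend $\Omega$ to the opposite side I would argue by contradiction: assuming $\Omega \ne \R$, pick a boundary point $a$ of $\Omega$ on the missing side and construct a $C^2$ test function $\varphi$ with $\varphi\geq u$ near $a$, $\varphi(a)=u(a)=M$ and $\varphi'(a) \ne 0$. Plugging into the subsolution inequality at $a$ produces the drift-correction contribution $-\varphi'(a)\int_{0<|z|\leq 1} z\, d\nu(z)$, and this integral is infinite in case \emph{i)} thanks to Remark \ref{crit} (which shows that $p>1$ forces $\int_{|z|\leq 1}|z|\,d\nu(z)=+\infty$); fixing the sign of $\varphi'(a)$ appropriately then contradicts the subsolution inequality. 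The delicate technical point, and where the assumption $p>1$ is really used, is the actual construction of such an admissible $\varphi$, which must exploit the closedness of $\Omega$, the upper semicontinuity of $u$ near $a$, and the fine structure of $\nu$ at the origin; this is carried out in the references \cite{C,BI} and adapted here to the OU drift $-y$.
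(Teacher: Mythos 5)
Your propagation lemma and your treatment of case \emph{ii)} of Assumption \ref{A2} are correct: testing with the constant function $\varphi\equiv M$ in the Barles--Imbert mixed formulation, deducing $u(y^*+z)=M$ for $\nu$-a.e.\ $z$, upgrading this to all of $\supp(\nu)$ by upper semicontinuity, and iterating via \eqref{assu} is exactly the mechanism behind \cite[Thm.~2]{C}. Note that the paper's own ``proof'' of Theorem \ref{SMPFL} is just a citation of \cite{C} for both cases, so on case \emph{ii)} you have in effect reconstructed, in a self-contained way, the argument the paper delegates to the literature.

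Case \emph{i)} is where there is a genuine gap, and it is not merely the ``delicate technical point'' you defer to the references: the step you defer is false as stated. You propose to pick a boundary point $a$ of the contact set $\Omega=\{u=M\}$ on the side not reached by propagation, and to build a $C^2$ function $\varphi$ touching $u$ from above at $a$ with $\varphi'(a)\neq 0$. Such a $\varphi$ need not exist. After propagation you know $u\equiv M$ on (say) $[a,+\infty)$, which already forces $\varphi'(a)\ge 0$ for every admissible test function; and if $u$ approaches its maximum tangentially from the left, e.g.\ $u(y)\ge M-(a-y)^2$ for $y<a$ near $a$, then for $y<a$
\begin{equation*}
\frac{\varphi(y)-\varphi(a)}{y-a}\ \le\ \frac{-(a-y)^2}{y-a}\ =\ a-y\ \longrightarrow\ 0 ,
\end{equation*}
so $\varphi'(a)\le 0$ as well, hence \emph{every} test function at $a$ has $\varphi'(a)=0$. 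Your infinite drift-correction term $-\varphi'(a)\int_{0<|z|\le 1}z\,d\nu(z)$ is then multiplied by zero and produces no contradiction; worse, with $\varphi'(a)=0$ (e.g.\ $\varphi\equiv M$) the subsolution inequality at $a$ is actually \emph{satisfied}, since the nonlocal term vanishes when $u\equiv M$ on $a+\supp(\nu)$. So your scheme only excludes subsolutions that meet the maximum non-tangentially, whereas the whole content of case \emph{i)} is to exclude tangential contact. For that, the contradiction must be extracted not at $a$ but at points $y<a$ where $u<M$, where suitable barriers have genuinely positive slope and the compensator term $-\varphi'(y)\int_{\delta<z\le 1}z\,d\nu(z)$ blows up as $\delta\to 0$ thanks to $p>1$ (Remark \ref{crit}); this requires a barrier/comparison construction near $\partial\Omega$, which is a different argument from the one you sketch and is what \cite[Thm.~4]{C} actually carries out. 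As written, the appeal to \cite{C,BI} cannot close your proof, because what you ask the references to provide is an object that in general does not exist.
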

\begin{proof}
The proof can be found  in \cite[Thm. 2]{C}  if Assumption \ref{A2} (ii) holds and in in \cite[Thm. 4]{C}  if Assumption \ref{A2} (i) holds.
\end{proof}
\begin{remark} \label{subordinator}\upshape  
If Assumption \ref{A2} does not hold we cannot expect   the Strong Maximum Principle to be true. 
We show that this is always  the case when the L\'evy process $Z$ is a subordinator (see Remark \ref{sub1}).  
If $Z$ is a subordinator, by  the properties of the measure $\nu$ \eqref{sub}, the operator $\Lu$ in \eqref{l} can be equivalently written as
\begin{equation}\notag
\Lu[y,f]=-f'(y)\cdot\Big(y+\int_0^1 z \nu(dz)\Big)+\int_0^{+\infty} (f(y+z)-f(y))\nu(dz)=0 \quad \text{in }\R. \\
\end{equation} 
We set $c:=\int_0^1 z \nu(dz)$ and we take $f\in\mathcal{C}^2(\R)$, bounded and such that
\begin{equation}\notag
f'(y)>0 \quad \text{for every }y<-c \quad \text{and} \quad f(y)\equiv f(-c) \quad \text{for every }y\ge -c.\\
\end{equation}
We claim such function is a (classical) subsolution of $-\Lu[y,f]=0$. 
Indeed,
$$\int_0^{+\infty}(f(y+z)-f(y)) \nu(dz)\ge0$$
 for every $y$, since $f$ is nondecreasing, whereas the term $f'\cdot(y+c)$ is negative for $y<-c$ and $0$ for $y\ge-c$. On the other hand the maximum of $f$ at $-c$ propagates to the right but not to the left. Then the Strong Maximum Principle does not hold.
\end{remark}

\begin{theorem}[Liouville Property] 
\label{LP} Assume that the L\'evy measure $\nu$ satisfies Assumptions \ref{A1}, \ref{A3}, \ref{A2} and
consider the problem 
\begin{equation}
\label{-L}
- \Lu[y,V]=0, \qquad y\in\R, \\
\end{equation}
with $\Lu$ defined as in \eqref{l}. Then the following hold: 
\begin{enumerate}
\item\label{Sub} every bounded viscosity subsolution to (\ref{-L}) is constant;
\item\label{Sup} every bounded viscosity supersolution to (\ref{-L}) is constant.
\end{enumerate}
\end{theorem}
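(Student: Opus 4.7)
The plan is to reduce the Liouville property to the Strong Maximum Principle of Theorem \ref{SMPFL}: once one knows that a bounded USC subsolution of $-\Lu[y,V]=0$ attains its supremum at some \emph{finite} point, the SMP immediately forces it to be constant. The only issue is that on the noncompact space $\R$ the supremum of a bounded USC function need not be attained, so I would recover compactness by perturbing with the Lyapunov function $\varphi$ provided by Lemma \ref{Lyapunov}.

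Subsolution case: let $V\in USC(\R)$ be bounded with $-\Lu[y,V]\le 0$ in the viscosity sense, and set $V_\eps:=V-\eps\varphi$. Since $V$ is bounded and $\varphi(y)\to+\infty$ as $|y|\to\infty$, the function $V_\eps$ is USC and $V_\eps(y)\to-\infty$ at infinity, so it attains a global maximum at some $y_\eps\in\R$. The smooth function $\psi(y):=V_\eps(y_\eps)+\eps\varphi(y)$ then satisfies $V\le\psi$ on all of $\R$ with equality at $y_\eps$, hence it is a legitimate global test function from above for $V$ at $y_\eps$. Because $\Lu$ is linear and vanishes on constants, the viscosity subsolution inequality $-\Lu[y_\eps,\psi]\le 0$ collapses to $\Lu[y_\eps,\varphi]\ge 0$.

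Coupling this with the Lyapunov bound $-\Lu[y_\eps,\varphi]\ge a\varphi(y_\eps)$ gives $\varphi(y_\eps)\le 0$. As $\varphi\to+\infty$ at infinity, the sublevel set $\{\varphi\le 0\}$ is bounded, so the family $\{y_\eps\}_{\eps>0}$ is relatively compact; extract $y_\eps\to y^*\in\R$. For each fixed $z\in\R$, the maximality of $y_\eps$ reads $V(y_\eps)\ge V(z)-\eps(\varphi(z)-\varphi(y_\eps))$, and since $\varphi(y_\eps)$ remains bounded the $O(\eps)$ error vanishes. Upper semicontinuity of $V$ then gives $V(y^*)\ge\limsup_{\eps\to 0}V(y_\eps)\ge V(z)$ for every $z$, so $V$ attains its sup at $y^*$. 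Theorem \ref{SMPFL} now yields $V\equiv V(y^*)$, proving part \ref{Sub}.

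Supersolution case: if $V\in LSC(\R)$ is bounded with $-\Lu[y,V]\ge 0$, then by linearity $-V$ is a bounded USC subsolution, so part \ref{Sub} applies and gives part \ref{Sup}. The main obstacle I anticipate is the nonlocal nature of $\Lu$, which is why I insist that $\psi$ dominate $V$ \emph{globally} rather than only in a neighborhood of $y_\eps$: this way the subsolution test is applied to $\psi$ itself and the integral term never needs to be split against the a priori non-smooth $V$. A secondary point to verify is that $\varphi\in\mathcal{Q}$ (cf.\ Lemma \ref{Lyapunov}) ensures $\Lu[y_\eps,\varphi]$ is well defined, so that the chain of inequalities above is meaningful.
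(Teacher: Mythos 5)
Your proof is correct and takes essentially the same route as the paper's: perturb the bounded subsolution by a small multiple of the Lyapunov function $\varphi$ of Lemma \ref{Lyapunov} to force the existence of a (limiting) global maximum point of $V$, and then conclude via the Strong Maximum Principle of Theorem \ref{SMPFL}, treating supersolutions by symmetry/linearity. The differences are only in bookkeeping --- the paper confines the maximum of $V-\eta\varphi$ to the sphere $|y|=R$ and lets $\eta\to 0$, whereas you confine the global maximum points $y_\eps$ through the bound $\varphi(y_\eps)\le 0$ and pass to the limit; your insistence on a globally dominating $C^2$ test function (with well-definedness of $\Lu[y_\eps,\varphi]$ guaranteed by $\varphi\in\mathcal{Q}$) is a clean and correct way to handle the nonlocal term.
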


\begin{proof}
Let $V$ be a bounded subsolution to (\ref{-L}). We can assume w.l.o.g. that $V\ge0$. Let $\varphi$ %is as in
be the Lyapunov function of  Lemma \ref{Lyapunov} and fix  $R>0$ such that $\varphi(y)>0$ for $|y|>R$. Define, for every $\eta>0$, 
\begin{equation*}
V_\eta(y)=V(y)-\eta\varphi(y)-\max_{|y|\leq  R}V. \\
\end{equation*} 
Observe that $V_\eta(y)\to -\infty$ as $|y|\to +\infty$ and moreover $V_\eta$ is USC. Then there exists $\overline{y}$ with $|\overline{y}|\geq R$ such that \[V_\eta(\overline{y})
\geq V_\eta(y) \qquad \forall |y|\geq R.\] Assume  that $|\overline{y}|>R$. Since $\varphi\in C^2$ and it is a strict supersolution to \eqref{-L} in $|y|>R$, we  get a contradiction to the fact that $V$ is a subsolution to \eqref{-L}. Then  $|\overline{y}|=R$. This implies that for all $\eta>0$ 
\[V(y)\leq \eta \varphi(y)+\max_{|y|\leq  R}V, \qquad\forall |y|\geq R\] and then, letting $\eta\to 0$, $V$ attains a maximum in the ball $|y|\leq R$. By the Strong Maximum Principle, theorem \ref{SMPFL}
$V$ is constant. 

The proof of (\ref{Sup}) for bounded supersolutions is analogous. 
\end{proof}

\section{The Hamilton-Jacobi-Bellman equation}
\label{4} 
The  HJB equation associated via dynamic programming to the value function \eqref{PO} of the control problem is 
\begin{equation}
\label{HJB}
- V^\eps_t+H(x,y,D_x V^\eps,D^2_{xx}V^\eps) -
\frac{1}{\eps}\Lu[y, V^\eps]+cV^\eps= 0 ,\quad \text{in} \quad(0,T) \times \R^n_+ \times \R ,
\end{equation}
where
\begin{equation}
\label{H}
H(x,y,p,X):=\min_{u\in U}\left\{- \frac 12 \tr ( \sigma(x,y,u)\sigma^T(x,y,u)X)-  f(x,y,u)\cdot p\right\}
\end{equation} 
and $\Lu$ is defined in \eqref{l}. It is a partial integrodifferential equation, briefly,  PIDE. The terminal condition associated to it is
\begin{equation}
\label{TC}
V^\eps(T,x,y) = g(x).
\end{equation}
Moreover, there is no natural boundary condition on the space boundary of the domain, \emph{i.e.}
$(0,T)\times\partial\R^n_+\times\R$.

The terminal  boundary value problem is well posed without prescribing any boundary condition because the value function is a solution in the set $(0,T)\times\R^n_+\times\R$. The irrelevance of the space boundary $(0,T)\times\partial\R^n_+\times\R$ is essentially due to the fact that $\R^n_+\times\R$ is an invariant set for the system \eqref{sde} for all admissible control functions (almost surely); that is, the state variable cannot exit this closed domain.  

\begin{proposition}\label{propvalue}
 For every $\eps>0$ the value function $V^\eps$ defined in \eqref{PO} is a continuous viscosity solution to \eqref{HJB} in the set $(0, T)\times\overline{\R^n_+}\times\R$
with terminal condition  \eqref{TC}. 
Moreover there exists a constant $C_T$ independent of $\eps$ such that
\begin{equation}
\label{growth}
 |V^\eps(t,x,y)|\leq C_T(1+|x|^2)\qquad \forall \ x\in\R^n, y\in\R, t\in [0,T].
 \end{equation}
  \end{proposition}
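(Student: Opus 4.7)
The plan is to establish the three claims---the uniform growth bound \eqref{growth}, continuity of $V^\eps$, and the viscosity solution property---using standard stochastic-control arguments adapted to the jump-diffusion setting.

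First I would prove \eqref{growth}, which is the part that deserves specific attention because of the requirement that $C_T$ be independent of $\eps$ and of the initial $y$. Since $g$ has quadratic growth, it suffices to show $\E[|X(T)|^2] \le C_T(1+|x|^2)$ uniformly in $\eps$, $y$, and $u(\cdot)$. The key observation is that although $f$ and $\sigma$ depend on the fast variable, the standing hypothesis that $f(x,\cdot,u)$ and $\sigma(x,\cdot,u)$ are bounded for each $(x,u)$, together with Lipschitz continuity in $(x,y)$ uniform in $u$, implies the linear growth
\begin{equation*}
|f(x,y,u)| + |\sigma(x,y,u)| \le C_0 + L|x|
\end{equation*}
with constants $C_0, L$ independent of $y$ and $u$. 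Applying It\^o's formula to $|X(s)|^2$---noting that $X$ itself has no jump component, as the jumps appear only in $Y_\eps$---taking expectations and invoking Gronwall's lemma yields $\E[|X(s)|^2] \le C_T(1+|x|^2)$ on $[t,T]$, and combining with $|g(x)|\le K(1+|x|^2)$ gives \eqref{growth}. Crucially, the initial $y$ never enters this estimate precisely because $f$ and $\sigma$ are bounded in $y$ at fixed $x$.

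Continuity of $V^\eps$ follows from analogous mean-square stability estimates. Coupling two copies of the system driven by the same $(W,Z)$ with distinct initial data $(x_i,y_i)$, one has $Y^1_\eps(s) - Y^2_\eps(s) = (y_1-y_2)e^{-(s-t)/\eps}$, which is deterministic; substituting this into the SDE for $X^1-X^2$ and applying Gronwall once more yields local Lipschitz control of $\E[|X^1(s)-X^2(s)|^2]$ in $(x_1-x_2, y_1-y_2)$. The quadratic growth of $g$ then transfers this to local continuity of $V^\eps$ in $(x,y)$ with a modulus depending on $|x|$. Continuity in $t$ follows from \eqref{growth} together with the dynamic programming principle.

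Finally, the viscosity solution property with respect to \eqref{HJB} and \eqref{TC} is a consequence of the dynamic programming principle for controlled jump-diffusions: the generator of $(X,Y_\eps)$ at a frozen control $u$ decomposes as the classical second-order operator in $x$ (whose minimization over $u$ produces $-H$) plus $\frac{1}{\eps}\Lu[y,\cdot]$ from \eqref{l}. The derivation follows the scheme of \cite{Ph2, S}, adapted to the integro-differential form of the $y$-generator; the terminal condition is immediate from the definition of $V^\eps$ at $s=T$. The main technical obstacle I anticipate is not in any single estimate but in verifying the hypotheses of the available PIDE viscosity theory in \cite{Ph2, S}, in particular the compatibility with controls progressively measurable with respect to the filtration generated jointly by $W$ and $Z$, and the correct treatment of the nonlocal integral term against test functions with quadratic growth at infinity.
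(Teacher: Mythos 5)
Your treatment of the growth bound, the continuity, and the interior viscosity-solution property follows essentially the same route as the paper (the paper simply cites \cite[Lemma 3.1, Prop. 3.1--3.3]{Ph2} for the moment estimates, the continuity, and the dynamic programming principle, while you sketch the It\^o--Gronwall and coupling arguments directly; your observation that $Y^1_\eps(s)-Y^2_\eps(s)=(y_1-y_2)e^{-(s-t)/\eps}$ is deterministic under a common driving noise is correct and fine). However, there is a genuine gap: the proposition asserts that $V^\eps$ is a viscosity solution in $(0,T)\times\overline{\R^n_+}\times\R$, i.e.\ \emph{up to and including the spatial boundary} $\partial\R^n_+$, where no boundary condition is prescribed. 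Your argument via the dynamic programming principle only yields the sub- and supersolution inequalities at interior points $(0,T)\times\R^n_+\times\R$; it says nothing about test functions touching $V^\eps$ at a point with some $x_i=0$, and in general a value function is \emph{not} a solution of the HJB equation at such points unless the dynamics has a special structure there.

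This is exactly where the paper invokes the standing assumption \eqref{zero}: since $f_i$ and the $i$-th row of $\sigma$ vanish when $x_i=0$, the boundary points of $\R^n_+\times\R$ are irrelevant in the sense of a Fichera-type classification, and one shows (as in \cite[Prop. 3.1]{BCM}, whose local argument adapts verbatim to the present nonlocal operator since the integro-differential term acts only in the $y$ variable) that any sub- or supersolution in the open set $(0,T)\times\R^n_+\times\R$ is automatically a sub- or supersolution in $(0,T)\times\overline{\R^n_+}\times\R$. Without this step the statement you prove is strictly weaker than the proposition, and the weakness matters later: the solution property up to $\partial\R^n_+$ is what makes the terminal-value problem well posed without boundary data and is used again in Step 3 of the proof of Theorem \ref{ConvTheo}. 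To repair your proof, add a final step showing that for a test function $\psi$ touching $V^\eps$ from above (resp.\ below) at a point with $x_i=0$ for $i$ in some index set, the degeneracy \eqref{zero} kills the corresponding drift and diffusion contributions in $H$, so the interior inequality passes to the limit at the boundary point.
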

\begin{proof}
Using  the boundedness of $f,\sigma$ with respect to $y$ and   classical estimates on the moments of solutions of \eqref{sde} it is easy to show (see \cite[Lemma 3.1]{Ph2}) that 
\[\E|X(T)|^2\leq C'_T(1+|x|^2),\] where $(X(s),Y_\eps(s)) $ is the solution to \eqref{sde} with initial data $X(t)=x$, $Y_\eps(t)=y$. Then condition \eqref{GrowthC} implies %then
 \eqref{growth}.

Again,  using moment estimates on the solutions to \eqref{sde} and the standing assumptions on the coefficients, it is possible to prove that $V^\eps$ is continuous for every $\eps$ (see \cite[Prop. 3.3]{Ph2}). Moreover, $V^\eps$ satisfies a dynamic programming principle (see \cite[Prop. 3.1 and Prop. 3.2]{Ph2}) and then by standard arguments  it is a viscosity solution to \eqref{HJB}. 
 
 Finally, %it is a standard argument to prove that 
 by condition \eqref{zero}, all the points of the boundary of $\R^n_+\times\R$ 
are irrelevant, according to a Fichera-type classification of boundary points for elliptic problems.  In other words, a sub- or supersolution in $(0, T)\times{\R^n_+}\times\R$ is automatically sub- or supersolution in $(0, T)\times\overline{\R^n_+}\times\R$. The argument is  detailed in \cite[Prop. 3.1]{BCM} in the case in which  the operator $\Lu$ is a local operator and %: note that the same argument 
it adapts without modifications to our case.   
\end{proof}

\section{Cell problem}
\label{5} 
In this Section we define the candidate limit Cauchy problem of the singular perturbed problem \eqref{HJB} as $\eps\to0$.  In particular we provide a formula for   the limit  Hamiltonian $\overline{H}$  as
\begin{equation}
\label{Heff}
\overline{H}(x,p,X)=\int_{\R} H(x,y,p,X)\mu(dy) 
\end{equation}
where $H$ is defined in \eqref{H} and $\mu$ is the invariant measure of the process defined in \eqref{Fast} (see Proposition \ref{lem:2}). 
 The main tool is the ergodicity of the process $Y_\lambda(t)$ proved in Section \ref{erg}. In the following  we will perform such construction in Theorem \ref{CellSold} using mainly PIDE methods. 

 In principle, for each fixed $(\bar{x},\bar{p},\bar{X})$ one expects the effective Hamiltonian $\overline{H}(\bar{x},\bar{p},\bar{X})$ to be a constant $k\in\R$ such that the \emph{cell problem}
\begin{equation}
\label{CP}
-\Lu[y,\chi]+H(\bar{x},y,\bar{p},\bar{X})=k \quad \text{in} \quad \R, \\
\end{equation}
has a viscosity solution $\chi$, called corrector. Actually, for our approach, it is sufficient to consider an approximate cell problem
\begin{equation}
\label{CPapproxd}
\delta\chi_\delta(y)-\Lu[y,\chi_\delta]+H(\bar{x},y,\bar{p},\bar{X})=0 \quad \text{in } \R, \\
\end{equation}
whose solution $\chi_\delta$ is also called approximate corrector.
\begin{theorem}
\label{CellSold}
For any fixed $(\bar{x},\bar{p},\bar{X})$ and $\delta>0$ the unique bounded  continuous viscosity solution $\chi_\delta(y)=\chi_{\delta;\bar{x},\bar{p},\bar{X}}(y)$ to \eqref{CPapproxd} is
\begin{equation}
\chi_\delta(y)=-\E\int_0^{+\infty} H(\bar{x},Y(t),\bar{p},\bar{X})e^{-\delta t}dt, \\
\end{equation} 
where $Y(t)$ solves the fast subsystem in \eqref{Fast} with $\lambda=1$ an $Y(0)=y$. Moreover,
\begin{equation} 
\lim_{\delta\to 0}\delta\chi_\delta(y)=-\int_{\R} H(\bar{x},y,\bar{p},\bar{X})\mu(dy)=: -\overline{H}(x,p,X), \\ 
\end{equation}  
locally uniformly in $y$, where $H$ is defined in \eqref{H} and $\mu$ is the unique invariant distribution of the process $Y$ defined in Proposition \ref{lem:2}.
\end{theorem}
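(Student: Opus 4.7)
The plan has two parts: (a) verify that the stochastic formula defines the unique bounded continuous viscosity solution of \eqref{CPapproxd}, and (b) pass to the limit $\delta\to 0$ using the ergodic estimate from Corollary \ref{Ab}.

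For (a) my first observation is that the standing assumptions of Section \ref{2} guarantee $f(\bar{x},\cdot,u)$ and $\sigma(\bar{x},\cdot,u)$ bounded uniformly in $u\in U$ (a compact set), so $y\mapsto H(\bar{x},y,\bar{p},\bar{X})$ is a bounded continuous function. Consequently the candidate
\[
\chi_\delta(y)=-\E\int_0^{+\infty} H(\bar{x},Y(t),\bar{p},\bar{X})e^{-\delta t}dt
\]
is well defined and bounded by $\|H(\bar{x},\cdot,\bar{p},\bar{X})\|_\infty/\delta$. Continuity in $y$ follows from dominated convergence together with the continuous dependence $Y(t)$ on $y$ that is immediate from the explicit representation $Y(t)=ye^{-t}+\int_0^t e^{u-t}dZ(u)$ recalled in Section \ref{3}.

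To check the viscosity solution property I would invoke Dynkin's formula for $\Lu$: for a bounded $C^2$ test function $\varphi$,
\[
M_t:=e^{-\delta t}\varphi(Y(t))-\varphi(y)-\int_0^t e^{-\delta s}(\Lu[Y(s),\varphi]-\delta\varphi(Y(s)))\,ds
\]
is a martingale. From this and the strong Markov property of $Y$ one derives the standard dynamic programming identity
\[
\chi_\delta(y)=-\E\int_0^\tau H(\bar{x},Y(t),\bar{p},\bar{X})e^{-\delta t}dt+\E\bigl[e^{-\delta\tau}\chi_\delta(Y(\tau))\bigr]
\]
for bounded stopping times $\tau$. Plugging a test function $\psi$ touching $\chi_\delta$ from above (resp. below) at a point $y_0$ and sending $\tau\to 0^+$ in the usual way yields the subsolution (resp. supersolution) inequality for \eqref{CPapproxd}; this is the same mechanism as in \cite[Prop.~3.1--3.2]{Ph2}, to which I would refer. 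Uniqueness of bounded continuous viscosity solutions follows from a comparison principle for the linear PIDE with strictly positive discount $\delta>0$, as available in \cite{Sa, BI}: if $\chi,\tilde\chi$ are two bounded solutions, then $\chi-\tilde\chi$ is a bounded viscosity sub- and supersolution of $\delta w-\Lu[y,w]=0$, which forces $w\equiv 0$.

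For (b), the main point is simply to apply Corollary \ref{Ab} to the bounded function $f(\cdot):=H(\bar{x},\cdot,\bar{p},\bar{X})$. Multiplying $\chi_\delta$ by $\delta$ and using the corollary with $\lambda=1$ gives
\[
\Bigl|\delta\chi_\delta(y)+\int_\R H(\bar{x},z,\bar{p},\bar{X})\mu(dz)\Bigr|\leq K(1+|y|^q)\,\delta\quad\text{as }\delta\to 0^+,
\]
so $\delta\chi_\delta(y)\to -\overline{H}(\bar{x},\bar{p},\bar{X})$. The factor $1+|y|^q$ is locally bounded in $y$, which upgrades the pointwise convergence to uniform convergence on every compact subset of $\R$, i.e. the claimed local uniform convergence.

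The main technical obstacle is the viscosity verification in step (a): handling the non-local operator $\Lu$ inside Dynkin's formula requires one to check that the compensated jump integrals are genuine martingales and that integrals over the singular measure $\nu$ near $0$ make sense against $C^2$ test functions (this is where the integrability condition \eqref{int} enters). Once this is in place, uniqueness via comparison and the $\delta\to 0$ limit via Corollary \ref{Ab} are essentially bookkeeping.
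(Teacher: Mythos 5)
Your proof is correct in substance and relies on the same three ingredients as the paper --- the comparison principle for PIDEs from \cite{Sa}, dynamic-programming/Markov-process arguments for the stochastic representation, and Corollary \ref{Ab} for the limit $\delta\to 0$ --- but organized in the opposite direction. The paper first obtains existence and uniqueness of a bounded viscosity solution abstractly, via the Perron--Ishii method together with comparison, and only afterwards identifies that solution with the resolvent formula through the dynamic programming principle (citing \cite{FS}, Chap.~II.3); you instead take the formula as the candidate, verify directly via Dynkin's formula and the strong Markov property that it is a viscosity solution (so existence comes for free), and then invoke comparison for uniqueness. Both routes are legitimate: yours trades the Perron construction for the verification argument, whose cost is precisely the martingale and small-jump integrability checks you flag at the end. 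One small repair is needed in your uniqueness step: you cannot subtract two viscosity solutions and assert that $\chi-\tilde\chi$ is a viscosity sub- and supersolution of $\delta w-\Lu[y,w]=0$; differences of viscosity solutions are not viscosity solutions of the corresponding homogeneous equation, even when the PIDE is linear. The correct (and simpler) statement is that the comparison principle of \cite{Sa} directly yields that any bounded subsolution lies below any bounded supersolution, so two bounded continuous solutions coincide. Part (b) of your argument is exactly the paper's: apply Corollary \ref{Ab} to the bounded function $h(\cdot)=H(\bar{x},\cdot,\bar{p},\bar{X})$, and note that the factor $1+|y|^q$ in the error bound is locally bounded in $y$, which gives the claimed locally uniform convergence.
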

\begin{proof}
For any fixed $(\bar{x},\bar{p},\bar{X})$, we define a function $h:\R\to\R$ as
\begin{equation} \notag
h(y):=H(\bar{x},y,\bar{p},\bar{X}) \quad \text{for every } y\in\R, \\
\end{equation}
which is bounded and Lipschitz by the standing assumptions. The existence and uniqueness of a bounded viscosity solution $\chi_\delta(y)$ follow from the Perron-Ishii method for PIDEs and the comparison principle in \cite{Sa}. Moreover by dynamic programming principle and standard arguments in Markov processes (see \cite{FS}, Chap. II.3)
we get that \begin{equation} 
\chi_\delta(y)=-\E\int_0^{+\infty} h(Y(t))e^{-\delta t}dt
\end{equation}
where $Y(t)$ solves the fast subsystem in \eqref{Fast} with $\lambda=1$. 
The second claim follows from Corollary \ref{Ab}.
\end{proof}

\begin{remark}\upshape
%\label{effcontrol}
If there is no control $u$ in the system the Hamiltonian $H$ is a linear function of $(p,X)$  and $\overline H$ is obtained simply by averaging the coefficients
\[
\overline H (x,p,X)=- \frac 12  \tr \left( \int_{\R} \sigma(x,y)\sigma^T(x,y)\mu(dy) \,X\right) -  \int_{\R} f(x,y)\mu(dy) \cdot p .
\]
We will us this observation in the application to asset pricing, Section \ref{uncontrol}.
\end{remark}

\section{The Convergence Theorem}
\label{6}
We state now our main result, namely, the convergence theorem for the singular perturbation problem. We will prove that the value function $V^\eps(t,x,y)$, solution to \eqref{HJB}, converges locally uniformly, as $\eps\to 0$, to a function $V(t,x)$ which can be characterized as the unique solution of the limit problem
\begin{equation}
\label{HJBlim}
\left\{ \begin{array}{ll}
-V_t+\overline{H}\left(x,D_x V,D^2_{xx}V\right)+cV%(x)
=0 & \text{in} \,\, (0,T)\times\overline{\R^n_+}, \\
\\  V(T,x)=g(x) & \text{in} \,\, \overline{\R^n_+}. \\
\end{array}\right.
\end{equation}
where the Hamiltonian $\overline{H}$ has been defined respectively in \eqref{Heff}.

\begin{theorem}[Convergence Theorem] 
\label{ConvTheo}
The  value functions $V^\eps$ defined in \eqref{PO}  converge  as $\eps\to 0$ uniformly on compact subsets of $[0,T]\times\overline{\R^n_+}\times\R$ to the unique continuous viscosity solution to the limit problem \eqref{HJBlim} satisfying a quadratic growth condition in $x$, \emph{i.e.},
\begin{equation}\notag
\label{GrowthC'}
\exists\;K>0\,\,\text{such that for every } (t,x)\in[0,T]\times\R^n_+,\,\,|V(t,x)|\le K(1+|x|^2). \\
\end{equation}
\end{theorem}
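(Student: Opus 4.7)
The plan is to follow the perturbed test function method of Evans and Alvarez--Bardi \cite{E, ab, AB2}, with the two modifications announced in the introduction: I will use the Liouville property of Theorem \ref{LP} to eliminate the fast variable $y$ from the relaxed semi-limits, and the Lyapunov function $\varphi$ of Lemma \ref{Lyapunov} to keep the $y$-maximiser bounded when testing the equation.

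\textbf{Step 1 (Semi-limits and $y$-independence).} Define
\begin{equation*}
\overline V(t,x,y) := \limsup_{\eps\to 0,\,(t',x',y')\to(t,x,y)} V^\eps(t',x',y'), \qquad \underline V(t,x,y) := \liminf_{\eps\to 0,\,(t',x',y')\to(t,x,y)} V^\eps(t',x',y').
\end{equation*}
By \eqref{growth} both are finite with $|\overline V|,|\underline V|\le C_T(1+|x|^2)$, hence bounded in $y$ for each fixed $(t,x)$. Multiplying the HJB equation \eqref{HJB} by $\eps$ and passing to the relaxed limits in the viscosity sense yields, for each fixed $(t,x)$, that $\overline V(t,x,\cdot)$ is a bounded USC viscosity subsolution and $\underline V(t,x,\cdot)$ a bounded LSC viscosity supersolution of $-\Lu[y,w]=0$ on $\R$. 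By Theorem \ref{LP} both are constant in $y$, so I write $\overline V=\overline V(t,x)$ and $\underline V=\underline V(t,x)$.

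\textbf{Step 2 (Perturbed test function and effective inequality).} To show $\overline V$ is a viscosity subsolution of \eqref{HJBlim}, fix $\phi\in C^2$ and a strict local maximum $(\bar t,\bar x)$ of $\overline V-\phi$ with $\bar t<T$, and set $\bar p=D_x\phi(\bar t,\bar x)$, $\bar X=D^2_{xx}\phi(\bar t,\bar x)$. Let $\chi_\delta$ be the approximate corrector provided by Theorem \ref{CellSold} for data $(\bar x,\bar p,\bar X)$, and define
\begin{equation*}
\phi^{\eps,\eta}(t,x,y) := \phi(t,x)+\eps\,\chi_\delta(y)+\eta\,\varphi(y).
\end{equation*}
Since $\chi_\delta$ is bounded, $\varphi\to+\infty$ at infinity and $V^\eps$ satisfies \eqref{growth} uniformly in $(\eps,y)$, the function $V^\eps-\phi^{\eps,\eta}$ attains its maximum at some $(t_\eps,x_\eps,y_\eps)$ which, for each fixed $\eta>0$, lies in a compact set uniformly in $\eps$, with $(t_\eps,x_\eps)\to(\bar t,\bar x)$ as $\eps,\eta\to 0$. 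Writing the viscosity subsolution inequality for $V^\eps$ with test $\phi^{\eps,\eta}$, computing $\tfrac{1}{\eps}\Lu[y,\phi^{\eps,\eta}]=\Lu[y,\chi_\delta]+\tfrac{\eta}{\eps}\Lu[y,\varphi]$, and substituting the cell problem identity
\begin{equation*}
-\Lu[y,\chi_\delta]+H(\bar x,y,\bar p,\bar X)=-\delta\chi_\delta(y)
\end{equation*}
makes the $y$-dependent part of $H$ cancel up to lower-order terms $O(|x_\eps-\bar x|)$ coming from continuity of $H$ in $(x,p,X)$. The term $\tfrac{\eta}{\eps}\Lu[y_\eps,\varphi]$ is handled via the Lyapunov inequality $-\Lu[y,\varphi]\ge a\varphi\ge 0$ together with the uniform boundedness of $y_\eps$. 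Passing to the limit $\eps\to 0$, then $\eta\to 0$, and finally $\delta\to 0$ using $\delta\chi_\delta\to-\overline H(\bar x,\bar p,\bar X)$ locally uniformly from Theorem \ref{CellSold}, yields
\begin{equation*}
-\phi_t(\bar t,\bar x)+\overline H(\bar x,\bar p,\bar X)+c\,\overline V(\bar t,\bar x)\le 0,
\end{equation*}
which is the desired subsolution inequality. The supersolution property of $\underline V$ follows symmetrically, with the sign of the Lyapunov correction reversed.

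\textbf{Step 3 (Terminal condition, comparison, conclusion).} The terminal inequalities $\overline V(T,x)\le g(x)\le\underline V(T,x)$ are obtained by sandwiching $V^\eps$ between explicit quadratic sub- and super-barriers built from $g$ and \eqref{growth}, using continuity of $g$. A comparison principle for \eqref{HJBlim} in the class of continuous functions of quadratic growth, obtained by the standard doubling-variables argument with an $|x|^2$ penalty and the fact that $\overline H$ inherits the structural properties of $H$ by averaging, then yields $\overline V\le V\le\underline V$ on $[0,T]\times\overline{\R^n_+}$. Since $\underline V\le\overline V$ by construction, equality holds throughout, which is equivalent to locally uniform convergence of $V^\eps$ to $V$. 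The main obstacle, as anticipated in the introduction, is the simultaneous management of the three parameters $\eps,\eta,\delta$ in Step 2: the Lyapunov penalty $\eta\varphi$ must be strong enough to confine the $y$-maximiser in a compact set (this is needed because $y$ lives on the whole real line and the fast subsystem is not uniformly bounded) without destroying the cancellation of $H$ provided by the approximate cell problem. This is exactly the refinement over \cite{BCM,bc} highlighted in the introduction.
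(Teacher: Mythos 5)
Your proposal is correct and follows essentially the same route as the paper's proof: relaxed semilimits, elimination of the $y$-dependence via the Liouville property (Theorem \ref{LP}), the perturbed test function built from the approximate corrector $\chi_\delta$ of Theorem \ref{CellSold} plus a Lyapunov correction with the sign flipped between the sub- and supersolution cases, and finally the comparison principle under quadratic growth \cite{DL-L} to identify the semilimits and upgrade to locally uniform convergence. The only differences are minor bookkeeping: you scale the Lyapunov term by $\eta\to 0$ and send $\delta\to 0$ last (which works because the $y$-maximisers accumulate at a fixed minimiser of $\varphi$, where pointwise convergence of $\delta\chi_\delta$ suffices), whereas the paper adds $\varphi$ with coefficient one and chooses $\delta=\delta(\eta)$ before letting $\eps\to 0$; also, you should record explicitly, as the paper does by invoking \eqref{zero} and the argument of \cite{BCM}, that the sub/supersolution inequalities hold up to the lateral boundary $\partial\R^n_+$, since the comparison is applied on the closed domain $\overline{\R^n_+}$ with no boundary condition there.
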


\begin{proof} 
The proof is divided into several steps.
\begin{step}[Relaxed semilimits] \upshape
Recall that the functions $V^\eps$ are locally equibounded in $[0,T]\times\overline{\R^n_+}\times\R$, uniformly in $\eps$ (see Proposition \ref{propvalue}). We define the half-relaxed semilimits in $[0,T]\times\overline{\R^n_+}\times\R$ (see \cite{BC-D}, Chap. V):
\begin{align*}
& \underline{V}(t,x,y):= \liminf_{\eps\to0,t'\to t,x'\to x,y'\to y} V^\eps(t',x',y'), \\
& \overline{V}(t,x,y):= \limsup_{\eps\to0,t'\to t,x'\to x,y'\to y} V^\eps(t',x',y'),  
\end{align*}
for $t\le T$, $x\in\R^n_+$ and $y\in\R$. 
It is immediate to get by definitions that also $\underline{V}$ and $\overline{V}$ satisfy a quadratic growth condition in $[0,T]\times\overline{\R^n_+}\times\R$, \emph{i.e.} there exists a positive constant $K_1$ such that
\begin{equation} \notag
\label{GrowthC''}
|\underline{V}(t,x,y)|\le K_1(1+|x|^2) \quad \text{and } |\overline{V}(t,x,y)|\le K_1(1+|x|^2) \\ 
\end{equation}
for every $(t,x,y)\in[0,T]\times\overline{\R^n_+}\times\R$. \\
\end{step}
\begin{step}[$\underline{V},\overline{V}$ do not depend on $y$] \upshape
 We prove the claim only for $\overline{V}$, since the other case is completely analogous.

 First of all observe  that the function $\overline{V}(t,x,y)$ is a viscosity subsolution to 
\begin{equation}
\label{Lu}
-\Lu[y,V]=0 \quad \text{in}\,\R. \\
\end{equation} The detailed argument is in \cite[Thm. 5.1]{BCM}. Then arguing as \cite[Lemma II.5.17]{BC-D}, we get that for every fixed   $(\bar{t},\bar{x})$,  also the function $y\to\overline{V}(\bar{t},\bar{x},y)$ is a subsolution to \eqref{Lu}. 
So we can conclude by   the Liouville property Theorem \ref{LP}, since  $\overline{V}$ is  bounded in $y$,  that the function  $y\to\overline{V}(\bar{t},\bar{x},y)$ is constant  for every $(\bar{t},\bar{x})\in(0,T)\times\R^n_+$. Finally, using the definition it is immediate to see that this implies that also $\overline{V}(T,x,y)$ do not depend on $y$.

\end{step}
\begin{step}[$\underline{V}$ and $\overline{V}$ are super- and subsolutions of the limit PDE] \upshape
First we show %claim
 that $\underline{V}$ and $\overline{V}$ are super- and subsolution to  \eqref{HJBlim} in $(0,T)\times\R^n_+$. We prove it %the claim
  only for $\overline{V}$ since the other case is completely analogous. The proof adapts the perturbed test function method introduced by Evans \cite{E} for  periodic homogenization and %setting and 
  developed in \cite{AB2, BCM} for singular perturbations. %unperiodic setting.
   We fix $(\bar{t},\bar{x})\in(0,T)\times\R^n_+$ and we show that $\overline{V}$ is a viscosity subsolution at $(\bar{t},\bar{x})$ of the limit PIDE. This means that if $\psi$ is a smooth function such that $\psi(\bar{t},\bar{x})=\overline{V}(\bar{t},\bar{x})$ and $\overline{V}-\psi$ has a maximum at $(\bar{t},\bar{x})$ then
\begin{equation}
-\psi_t(\bar{t},\bar{x})+\overline{H}(\bar{x},D_x\psi(\bar{t},D^2_{xx}\psi(\bar{t},\bar{x}))+c\overline{V}(\bar{t},\bar{x})\le 0.  
\end{equation}
Without loss of generality we assume that the maximum is strict in $B((\bar{t},\bar{x}),r)$ %\cap([0,T]\times\R^n_+)$
 and that $0<\bar{t}-r<\bar{t}+r<T$ and $\bar{x}_i>r$ for all $i$. 
We consider now the  Lyapunov  function $\phi\in\mathcal{C}^2(\R)$ as in Lemma \ref{Lyapunov}. %Eventually 
By adding a constant to $\phi$ if necessary, we can assume that $-\Lu[y,\phi]\ge0$ in $\R$. We fix $\bar{y}\in\R$, such that $\phi(\bar{y})=\min_{\R} \phi$. 
We can also assume that $\phi(\bar{y})<\min_{|y-\bar{y}|\ge %r
R}\phi$ for $R%r
$ sufficiently large.  
Let now $\eta>0$ and take $\delta>0$ sufficiently small such that if $\chi_\delta$ is the solution of \eqref{CPapproxd} at $(\bar{x},D_x\psi(\bar{t},\bar{x}),D^2_{xx}\psi(\bar{t},\bar{x}))$ (see Theorem \ref{CellSold}), then 
\begin{equation}
\label{deltabound}
|\delta\chi_\delta(y)+\overline{H}(\bar{x},D_x\psi(\bar{t},\bar{x}),D^2_{xx}\psi(\bar{t},\bar{x}))|\le\eta \quad \text{for any } y\in B(\bar{y}, R%r
). \\
\end{equation} 
We define the perturbed test function as
\begin{equation}
\psi^\eps(t,x,y):=\psi(t,x)+\eps\chi_\delta(y)+\phi(y). \\
\end{equation}
Observe that
\begin{equation}
\limsup_{\eps\to0,t'\to t,x'\to x, y'\to y} V^\eps(t',x',y')-\psi^\eps(t',x',y')=\overline{V}(t,x)-\psi(t,x)-\phi(y). \\
\end{equation}
Arguing as in \cite[Lemma V.1.6]{BC-D} we get %show that there exist 
sequences $\eps_n\to0$ and $(t_n,x_n,y_n)\in B:=B((\bar{t},\bar{x}),%\bar{y},
r)%\cap[0,T]\times\R^n_+\times\R)
\times B(\bar y, R)$ such that $(t_n,x_n,y_n)\to(\bar{t},\bar{x},\hat{y})$ for some $\hat{y}\in B(\bar{y},R)$ such that $\phi(\hat{y})=\phi(\bar{y})$, and, as $n\to+\infty$,
\begin{equation} \notag
V^{\eps_n}(t_n,x_n,y_n)-\psi^{\eps_n}(t_n,x_n,y_n)\to\overline{V}(\bar{t},\bar{x})-\psi(\bar{t},\bar{x})-\phi(\hat{y}) \\
\end{equation}
and $(t_n,x_n,y_n)$ is a maximum of $V^{\eps_n}-\psi^{\eps_n}$ in $B$. 

%Indeed, let $\eps_n$, $\{t^{(n)},x^{(n)}\}$ and $\{y^{(n)}\}$ be sequences such that
%\begin{equation} \notag
%(t^{(n)},x^{(n)})\to(\bar{t},\bar{x}), \quad y^{(n)}\to\bar{y}, \quad \eps_n\to0, \\
%\end{equation}
%and
%\begin{equation} \notag
%V^{\eps_n}(t^{(n)},x^{(n)},y^{(n)})-\psi^{\eps_n}(t^{(n)},x^{(n)},y^{(n)})\to\overline{V}(\bar{t},\bar{x})-\psi(\bar{t},\bar{x})-\phi(\bar{y}), \\
%\end{equation}
%as $n\to+\infty$. 
%Let $(t_n,x_n,y_n)$ be a maximum point of $V^{\eps_n}-\psi^{\eps_n}$ on $\overline{B}$ and extract subsequences, still denoted $(t_n,x_n)_{n\ge0}$, $(y_n)_{n\ge0}$ and $(\eps_n)_{n\ge0}$, such that 
%\begin{equation} \notag
%(t_n,x_n)\to(\hat{t},\hat{x}), \quad y_n\to\hat{y}, \\
%\end{equation} 
%and
%\begin{equation} \notag
%V^{\eps_n}(t_n,x_n,y_n)-\psi^{\eps_n}(t_n,x_n.y_n)\to s-\phi(\hat{y}), \\
%\end{equation}
%as $n\to+\infty$. Since $\overline{V}(\hat{t},\hat{x})-\psi(\hat{t},\hat{x})-\phi(\hat{y})\ge s-\phi(\hat{y})$ by definition and since $(V^{\eps_n}-\psi^{\eps_n})(t_n,x_n,y_n)\ge(V^{\eps_n}-\psi^{\eps_n})(t^{(n)},x^{(n)},y^{(n)})$, we obtain
%\begin{equation} \notag
%\overline{V}(\hat{t},\hat{x})-\psi(\hat{t},\hat{x})-\phi(\hat{y})\ge s-\phi(\hat{y})\ge \overline{V}(\bar{t},\bar{x})-\psi(\bar{t},\bar{x})-\phi(\bar{y}). \\
%\end{equation}
%Since $(\bar{t},\bar{x})$ is a strict maximum point for $\overline{V}-\psi$ and $\bar{y}\in B$ is a minimum (not necessarily strict) for $\phi$, we get $\hat{x}=\bar{x}$ and $\phi(\hat{y})=\phi(\bar{y})$, which implies that $\hat{y}\in B$ (i. e. it is a maximum interior point). 
Then, using the fact that $V^\eps$ is a subsolution to \eqref{HJB}, we get
\begin{equation}
-\psi_t+H(x_n,y_n,D_x\psi,D^2_{xx}\psi)+cV^\eps-\Lu[y_n,\chi_\delta]-\frac{1}{\eps_n}\Lu[y_n,\phi] \le0 \\
\end{equation}
where $V^\eps$, $\psi$, $\chi_\delta$ and $\phi$ (and their derivatives) are computed respectively in $(t_n,x_n,y_n)$, $(t_n,x_n)$ and in $y_n$. Using the fact that $\phi$ satisfies $-\Lu[\cdot,\phi]\ge0$, we get from the previous inequality that
\begin{equation}
-\psi_t+H(x_n,y_n,D_x\psi,D^2_{xx}\psi)+cV^{\eps_n}-\Lu[y_n,\chi_\delta] \le0. \\
\end{equation}
We now recall that $\chi_\delta$ solves the $\delta$-cell problem \eqref{CPapproxd}%for a fixed $\delta>0$ such that \eqref{deltabound} holds
, thus
\begin{gather}
-\psi_t(t_n,x_n)+H(x_n,y_n,D_x\psi(t_n,x_n),D^2_{xx}\psi(t_n,x_n)) \notag \\
-H(\bar{x},y_n,D_x\psi(\bar{t},\bar{x}),D^2_{xx}\psi(\bar{t},\bar{x}))-\delta\chi_\delta(y_n)+cV^{\eps_n}(t_n,x_n,y_n)\le0. \notag \\
\end{gather}
By taking the limit as $n\to+\infty$ the second and the third term of the left-hand side of this inequality cancel out. Next we use \eqref{deltabound} to replace $-\delta\chi_\delta$ with $\overline{H}-\eta$ and get that
\begin{equation}
-\psi_t(\bar{t},\bar{x})+\overline{H}(\bar{x},D_x\psi(\bar{t},\bar{x}),D^2_{xx}\psi(\bar{t},\bar{x}))+c\overline{V}(\bar{t},\bar{x})\le\eta. \\
\end{equation}
Finally, since $\eta>0$ is arbitrary, we conclude. \\
To prove that $\underline{V}$ is a supersolution to \eqref{HJBlim} we proceed exactly in the same way, just taking as a perturbed test function
\begin{equation} \notag
\psi^\eps(t,x,y)=\psi(t,x)+\eps\chi_\delta(y)-\phi(y). \\
\end{equation}

Finally, we claim that $\overline{V}$ and $\underline{V}$ are respectively a sub and a supersolution to \eqref{HJBlim} 
also at the boundary of $\R^n_+$. In this case it is sufficient to repeat exactly the same
argument as in \cite[Prop. 3.1%Thm 5.1
]{BCM} to get the conclusion, recalling that
the Hamiltonian $\overline{H}$ is defined as
\[
\overline{H}(x,p,X)=\int_{\R} \min_{u\in U}\left\{- \frac 12 \tr ( \sigma(x,y,u)\sigma^T(x,y,u)X)-  f(x,y,u)\cdot p\right\} \mu(dy)\]
and $f, \sigma$ satisfy \eqref{zero}.
\end{step}

\begin{step}[Uniform convergence] \upshape
We observe that by definition $\overline{V}\ge\underline{V}$ and that both $\overline{V}$ and $\underline{V}$ satisfy the same quadratic growth condition \eqref{GrowthC''}. Moreover, the Hamiltonian $\overline{H}$ defined in \eqref{Heff} inherits all the regularity properties of $H$ defined in the first section. Therefore, we can apply the  comparison result  in \cite{DL-L}  between sub- and supersolutions to parabolic PDE problems satisfying a polynomial growth condition, to deduce that $\overline{V}\le\underline{V}$. Therefore, 
\begin{equation} \notag
\overline{V}=\underline{V}=:V. \\
\end{equation}
In particular, $V$ is continuous, and by Lemma V.1.9 in \cite{BC-D}, this implies that $V^\eps$ converges locally uniformly to $V$.
\end{step}
 \end{proof} 
\begin{remark}\upshape
The idea of adding or subtracting  the Lyapunov function $\phi$ to the perturbed test function $\psi^\epsilon$ seems to be new and is an appropriate tool for extending the methods of \cite{E, AB2} from the periodic case to the present case of unbounded fast variables $y$. It applies also to the proof of Thm. 5.1 of \cite{BCM}, therefore filling in a gap of that proof.
\end{remark}
\begin{remark}
\label{effcontrol}\upshape
The solution $V$  of the limit Cauchy problem \eqref{HJBlim} can be represented as the value function of a new control problem obtained by a relaxation procedure proposed in \cite{BTer} for deterministic systems. Define the extended control set
\[
U^{ex}:= L^1((\R,\mu); U)
\]
and note that it contains a copy of $U$, given by the constant functions.
Extend to $U^{ex}$ the drift and the diffusion of the system \eqref{sde} as follows %, for $\beta\in
\[
\hat\sigma \hat\sigma^T(x,\beta):=\int_{\R} \sigma \sigma^T(x,y,\beta(y)) \mu(dy) , \quad \hat f (x,\beta):= \int_{\R} f((x,y,\beta(y)) \mu(dy), \quad \beta\in U^{ex}.
\]
Then the measurable selection argument in \cite{BTer} allows to prove that
\[
\int_{\R} H(x,y,p,X) \mu(dy) = \inf_{\beta\in U^{ex}}\left\{- \frac 12 \tr (\hat\sigma \hat\sigma^T(x,\beta) X)- \hat f (x,\beta)\cdot p\right\} ,
\] 
i.e., $\overline H$ is a Bellman Hamiltonian. By uniqueness of viscosity solutions to \eqref{HJBlim} we can conclude that
\[
V(t,x)=\sup_{\beta\in\mathcal{U}^{ex}}\E[e^{c(t-T)}g(\hat X(T))\,|\, \hat X(t)=x%\, Y_\eps(t)=y
] ,
\]
where $\hat X$ solves
\[
d\hat X(s)=\hat f(\hat X(s),\beta(s)) ds + \hat\sigma(\hat X(s),\beta(s))dW(s)
\]
and $\mathcal{U}^{ex}$ denotes the progressively measurable processes taking values in ${U}^{ex}$. This is an \emph{effective control problem} associated to the general multiscale control problem of Section \ref{2}. In the application to Merton's portfolio optimisation, % of the next 
Section \ref{effMert}, however, we will find a simpler representation of the limit control problem by exploiting the explicit form of the Hamiltonian in that case.
\end{remark}

\section{Applications and examples}%
\label{7}
\subsection{Asset pricing} \label{uncontrol}
We consider $n%N
$ underlying risky assets with price $X^i$ evolving according to the standard lognormal model:
\begin{equation}
\label{SDE1}
\left\{ \begin{array}{ll}
dX^i(t)=\alpha^i X^i(t)dt+\sqrt{2}X^i(t)\sigma_i(Y_\eps(t))dW^i(t), \quad i=1,\dots,n, \\
dY_\eps(t)=-\frac{1}{\eps}Y_\eps(t)dt+dZ(\frac{t}{\eps})
\end{array} \right.
\end{equation}
where $X^i(t_0)=x^i\ge0$, $\sigma_i:\R\to\R$ are bounded Lipschitz continuous function, %bounded away from $0$,
$\sigma_i(y)\geq 0$ for all $y$, $i=1,\dots,n$, the processes $W=(W^1,\dots,W^n)$ and $Z$ are independent and, respectively, a standard $n$%N
-dimensional Brownian motion and a pure jumps L\'evy process with L\'evy measure satisfying the Assumptions \ref{A1}, \ref{A3}, \ref{A2}. 

The problem we consider here is the pricing of an European option given by a non-negative payoff function $g$ depending on the underlying $X^i$ and by a maturity time $T$. According to risk-neutral theory, to define a no-arbitrage derivative price we have to use an equivalent martingale measure $\Q$ under which the discounted stock prices $e^{-rt}X^i(t)$ are martingales, where $r$ is the instantaneous interest rate for lending or borrowing money. Nevertheless, we assume as in \cite{FPS} that the process $Z$ remains unchanged under the equivalent martingale measure $\Q$. Thus the system, under a risk-neutral probability $\Q$, \eqref{SDE1} writes as 
\begin{equation}
\label{SDE2}
\left\{ \begin{array}{ll} 
dX^i(t)=r X^i(t)dt +\sqrt{2}  X^i(t)\sigma_i(Y_\eps(t))%\cdot
 dW^{i,\Q}(t),  \quad i=1,\dots,n, \\
dY_\eps(t)=-\frac{1}{\eps}Y_\eps(t)dt+dZ(\frac{t}{\eps}). \\
\end{array} \right.
\end{equation}
In this setting, an European contract has no-arbitrage price given by the formula
\begin{equation}
\label{Veps}
V^\eps(t,x,y):=\E^\Q[e^{c(t-T)}g(X(T))\,|\,X^i(t)=x^i,\,Y_\eps(t)=y], \quad 0\le t \le T
\end{equation}
where $c>0$ and the payoff function $g$ satisfies \eqref{GrowthC}. 

The (linear) HJB equation associated with the price function is
\begin{equation}
\label{HJBp}
\left\{ \begin{array}{ll}
-V^\eps_t %+H_P(x,y,D_x V^\eps,D_{xx} V^\eps)
-\sum_{i=1}^{n}x_i^2 \sigma_i^2(y) V^\eps_{x_ix_i} %\tr{(X)}
-r  x \cdot D_x V^\eps -\frac{1}{\eps}\Lu[y, V^\eps]+cV^\eps=0 \quad &\text{in } (0,T)\times\overline{\R^n_+}\times\R , \\ \\
V^\eps(T,x,y)=g(x) \quad &\text{in } \overline{\R^n_+}\times\R , \\
\end{array} \right.
\end{equation}
where
%\begin{equation} 
%H_P(x,y,p,X):=-|x|^2|\sigma(y)|^2 \tr{(X)}-r  x \cdot p, \\
%\end{equation}and 
$\Lu$ is as in \eqref{l}. 
 
Since all the assumptions are satisfied, the convergence theorem holds, and the prices $V^\eps(t,x,y)$ converge locally uniformly, as $\eps\to0$, to the unique viscosity solution $V(t,x)$ of the limit equation
\begin{equation}
\label{HJBeff}
\left\{ \begin{array}{ll}
- V_t- \sum_{i=1}^{n}x_i^2 \int_{\R} \sigma_i^2(y)\mu(dy) \,V^\eps_{x_ix_i}%|x|^2\int_{\R} |\sigma(y)|^2\mu(dy) \cdot \tr{(D_{xx}^2V)}
-r x \cdot D_xV+cV=0 \quad &\text{in } (0,T)\times\overline{\R^N_+}, \\ \\
V(T,x)=g(x), \quad &\text{in } \overline{\R^N_+}. \\
\end{array} \right.
\end{equation}
Then $V$ can be represented as 
\begin{equation}
V(t,x):=\E^\Q[e^{c(t-T)}g(\overline{X}(T))\,|\,\overline{X}(t)=x], \quad 0 \le t \le T, \\
\end{equation}
where $\overline{X}(t)$ satisfies the averaged \emph{effective system}
\begin{equation}
\label{effSDE}
d\overline{X}^i(t)=r\overline{X}^i(t)dt+\sqrt{2}  \overline{\sigma}_i\overline{X}^i(t)   dW^{i,\Q}(t), \\
\end{equation}
whose (constant) volatility $\overline{\sigma}_i$ is the so-called mean historical volatility for the $i$-th asset
\begin{equation}
\overline{\sigma}_i:=%\sqrt{
\left(\int_{\R} \sigma_i^2(y) \mu(dy)\right)^{\frac 12} . %\\
\end{equation}
Therefore the limit of the pricing problem as $\eps\to0$ is a new pricing problem for the effective system \eqref{effSDE}.  
\begin{remark}
\label{correlation}
\upshape
The choice of a diagonal matrix $\sigma$ in \eqref{SDE1} is made only for notational simplicity. As in the previous sections and in \cite{BCM} we can replace the term $\sigma_i(Y_\eps(t))dW^i(t)$ in \eqref{SDE1} with the $i$-th component of $\sigma(Y_\eps(t))dW(t)$ for a $n\times r$ matrix $\sigma$, therefore allowing  that the components of the Brownian motion acting on different asset prices be correlated. In this case instead of an effective volatility $\overline{\sigma}_i$ for each asset we find an effective matrix $\overline{\sigma}$ such that $\overline{\sigma}\,\overline{\sigma}^T= \int_{\R} \sigma(y) \sigma^T(y)\mu(dy)$.
\end{remark}

\subsection{Merton's portfolio optimization problem} 
\subsubsection{The convergence result}
We consider now another classical problem in finance, the Merton's optimal portfolio allocation, under the assumption of fast oscillating stochastic volatility. 
We consider a financial market consisting of a nonrisky asset $S$ evolving according to the deterministic equation 
\begin{equation} \notag
dS(t)=r S(t) dt, \\
\end{equation}
with $r>0$, and $n$ risky assets $X^i(t)$ evolving according to the stochastic system
\eqref{SDE1}. We denote by $\W$ the wealth of an investor. The investment policy - which will be the control input - is defined by a progressively measurable process $u$ taking values in a compact set $U$, and $u^i_t$ represents the proportion of wealth invested in the asset $X^i(t)$ at time $t$. Then the wealth process evolves according to the following system:
\begin{equation}
\label{SDE3}
\left\{ \begin{array}{ll}
d\W(t)=\W(t) \bigl( r + \sum_{i=1}^n((\alpha^i-r) u^i(t))\bigr) dt + \sqrt{2}\W(t) \bigl(\sum_{i=1}^N u^i(t)\sigma_i(Y_\eps(t))\bigr) dW(t), \\
dY_\eps(t)=-\frac{1}{\eps}Y_\eps(t) dt+dZ(\frac{t}{\eps})
\end{array} \right.
\end{equation}
where $\W(t_0)=w>0$. \\

The Merton's problem consists in choosing a strategy $u_.$ which maximizes a given utility function $g$ at some final time $T$. In particular the problem can be described in terms of the value function 
\begin{equation}\notag
V^\eps(t,w,y):=\sup_{u_.\in\mathcal{U}}\E[g(\W(T))\,|\,\W(t)=w,Y_\eps(t)=y]. \\
\end{equation}
Tipically the utility functions in financial applications are chosen in the class of HARA (hyperbolic absolute risk aversion) functions $g(w)=a(bw+c)^\gamma$, where $a,b,c$ are constants, and $\gamma\in(0,1)$ is a given coefficient called the relative risk premium coefficient. The HJB equation associated with the Merton value function is
\begin{equation}\notag
-V^\eps_t+H_M(w,y,D_wV^\eps,D^2_{ww}V^\eps)-\frac{1}{\eps}\Lu[y,V^\eps]=0, \\
\end{equation}
in $(0,T)\times\R_+\times\R$, complemented with the terminal condition $V^\eps(T,w,y)=g(w)$. The integro-differential operator $(1/\eps)\Lu$ is the infinitesimal generator of the process $Y_\eps$, with $\Lu$ as in \eqref{l}, % defined in previous chapter 
and $H_M(w,y,p,X)$ is defined as
\begin{equation}\notag
H_M(w,y,p,X)=\min_{u\in U}\left\{-\left(\sum_{i=1}^n u^i\sigma_i(y)\right)^2 w^2 X - \left[r+\sum_{i=1}^n (\alpha^i-r)u^i\right]w p\right\}. \\
\end{equation} 
Our main theorem applies also in this case and states that the value function $V^\eps$ converges locally uniformly to the unique solution of the limit problem 
\begin{equation}
\label{SDE4}
\left\{ \begin{array}{ll}
- V_t+\int_{\R}H_M(w,y,D_wV,D^2_{ww}V)\mu(dy)=0 \quad &\text{for } t\in(0,T), \quad w>0, \\
V(T,w)=g(w) \quad &\text{for } w>0, \\
\end{array} \right.
\end{equation}
where $\mu$ is the invariant distribution associated with the fast subsystem. This convergence result in the framework of L\'evy processes is new in the literature and extends and complements the results stated in \cite{BCM} for fast volatility processes driven by Brownian motion.   
%\\
\begin{remark}
\upshape
As in the Remark \ref{correlation} of the previous section we can allow the correlation among the noises acting on different assets, as in \cite{BCM}, Sect. 6.2.
\end{remark}

\subsubsection{The effective Merton's problem}
\label{effMert}
Next we want to interpret the effective PDE in \eqref{SDE4} as the HJB equation of a suitable \emph{effective control problem}, simpler than the general one described in Remark \ref{effcontrol}.
For simplicity we restrict ourselves to the case of a single risky asset, \emph{i.e.}, $n=1$. The equation for the wealth becomes
\begin{equation}\notag
d\W(t)=\W(t) (r+(\alpha-r)u(t)) dt + \sqrt{2}\W(t)u(t)\sigma(Y_\eps(t)) dW(t), \quad \alpha>r, \\
\end{equation} 
and the HJB equation for $V^\eps$ is
\begin{equation}\label{PIDE5}
-V^\eps_t-\max_{u\in U}\Bigl\{u^2 \sigma^2(y) w^2 V^\eps_{ww} + [r+(\alpha-r)u] w V^\eps_w\Bigr\}=\frac{1}{\eps}\Lu[y,V^\eps]. \\
\end{equation}
%Substituting $H_M$ in \eqref{SDE4} we obtain 
Therefore the effective PDE  in \eqref{SDE4} is
\begin{equation}\label{braces}
- V_t- r w V_w -\int_{\R}\max_ {u\in U}\Bigl\{u^2\sigma^2(y)w^2 V_{ww} + %[r+
(\alpha-r)u%]
w V_w\Bigr\}\mu(dy)=0. \\
\end{equation} 
%This convergence result in the framework of L\'evy processes is new in literature and extends and complements the results stated in \cite{BCM} for fast volatility process driven by Brownian motion.   
Now we assume that the utility function $g$ is increasing and concave with $g''<0$. Then one expects that the solution $V$ is also increasing in the initial wealth $w$ with $V_{ww}<0$. In this case the expression in braces in \eqref{PIDE5} and \eqref{braces} is a concave parabola in $u$ and the maximum in the Hamiltonian is easily computed if it is attained in the interior of $U$, e.g., if $U$ is very large. Then one gets, at least formally,
\[
H_M(w,y,p,X)=\frac{(\alpha - r)^2 p^2}{4\sigma^2(y) X}
\]
and the effective PDE \eqref{braces} becomes
\begin{equation}\notag
- V_t- r w V_w + \int_{R}\frac{1}{\sigma^2(y)}\mu(dy) \frac{(\alpha - r)^2 V_{w}^2}{4 V_{ww}}=0  .
\end{equation}
This is the HJB equation of  the \emph{Merton's problem with constant volatility $\overline{\sigma}>0$}, where the wealth dynamics is 
\begin{equation}\label{constant_v}
d\W(t)=\W(t)(r+(\alpha-r)u(t))dt+\sqrt{2}\W(t) u(t)\overline{\sigma} dW(t), \\
\end{equation}
if and only if
\begin{equation}\label{sigma}
\overline{\sigma}:=\Bigl(\int_{\R}\frac{1}{\sigma^2(y)}\mu(dy)\Bigr)^{-\frac{1}{2}}. \\
\end{equation}
Therefore this is  the correct parameter to use in a Merton model with constant volatility if we consider it as an approximation of a model with fast and ergodic stochastic volatility. We can call it the \emph{effective Merton problem}.
We point out that the \emph{effective volatility} $\overline{\sigma}$ for the Merton problem is the harmonically averaged long-run volatility, that is smaller than the usual mean historical volatility
\begin{equation}\notag
\tilde{\sigma}:=\Bigl(\int_{\R} \sigma^2(y) \mu(dy)\Bigr)^{\frac{1}{2}} %\\
\end{equation}
in uncontrolled systems, see Section \ref{uncontrol}. Therefore the use of the correct parameter $\overline{\sigma}$ in the model leads to an increase of the value function, \emph{i.e.}, of the optimal expected utility. 

\subsubsection{The solution for HARA utility}
In some cases the  Cauchy problem for the effective equation\eqref{SDE4} can be solved explicitly, giving also a more rigorous derivation of the formula \eqref{sigma} for the effective volatility.  Let us take as a constraint on the control $u(t)$ %we take 
the interval 
\begin{equation} \notag
U:=[R_1,R], \quad \text{with } -R\le R_1\le 0 < R, \\
\end{equation}
%We also assume that the 
and as terminal cost the HARA function
\begin{equation} \notag
\label{UtFunct}
g(w)=a\frac{w^\gamma}{\gamma}, \quad 0<\gamma<1, \quad a>0.
\end{equation}
Since the terminal condition is now %obviously
%\begin{equation} \notag
$V(T,w)=a%\frac
{w^\gamma}/{\gamma},$ 
 %\\ \end{equation}
 we look for solutions of the form 
 $$V(t,w)=\frac{w^\gamma}{\gamma}v(t) \quad \text{with }\; v(t)\ge 0.
 $$
  By plugging it into the Cauchy problem we get
\begin{equation} \notag
\dot{v}=-\gamma\overline{h}v, \quad v(T)=a, \quad \overline{h}:=r+\int_{\R}\max_{u\in U}\bigl\{(\alpha-r)u+(\gamma-1)\sigma^2(y)u^2\bigr\}\mu(dy). \\
\end{equation}

Therefore the uniqueness of solution gives 
\begin{equation}
\label{Sol}
V(t,w)=a\exp{\Bigl\{\gamma\bar{h}(T-t)\Bigr\}}\frac{w^\gamma}{\gamma}, \quad 0<t<T. \\
\end{equation}
We compute the rate of exponential increase $\overline{h}$ and get 
\begin{align*}\notag
\overline{h}=r\,+&\int_{\{y\,:\,2R(1-\gamma)\sigma^2(y)<\alpha-r\}}\bigl[(\alpha-r)R+(\gamma-1)R^2\sigma^2(y)\bigr]\mu(dy) \notag \\
&+\int_{\{y\,:\,2R(1-\gamma)\sigma^2(y)\ge\alpha-r\}}\frac{(\alpha-r)^2}{4(1-\gamma)\sigma^2(y)}\mu(dy). \notag \\
\end{align*}
This formula simplifies considerably if the $\mu$-probability of the set $\{y\,:\,2R(1-\gamma)\sigma^2(y)\ge\alpha-r\}$ is 1, e.g., %\emph{i.e.}
 for large upper bound $R$ on the control. In fact we get
\begin{equation}\notag
\overline{h}=r+  \frac{(\alpha-r)^2}{4(1-\gamma)}\int_{\R} \frac{1}{\sigma^2(y)}\mu(dy) . \\
\end{equation}
With this  expression for $\overline{h}$ the function $V$ given by \eqref{Sol}  coincides with the classical Merton formula solving the %Merton's
problem with constant volatility $\overline{\sigma}>0$ (for $2R(1-\gamma)\overline{\sigma}\ge\alpha-r$) if and only if $\overline{\sigma}$ is given by \eqref{sigma}, i.e.,

\begin{equation}\notag
V(t,w) = a\exp{\Bigl\{\gamma\left[r+\frac{(\alpha-r)^2}{4(1-\gamma)\overline{\sigma}^2}\right](T-t)\Bigr\}}\frac{w^\gamma}{\gamma}. \\
\end{equation}

\section*{Acknowledgements} We are grateful to Carlo Sgarra for pointing out to us the literature on non-Gaussian models of stochastic volatility and to Tiziano Vargiolu
for several discussions on L\'evy processes. The main results of this research were presented also in the third author's Master Thesis, defended in February 2014.

\end{document}